\newcommand{\bdy}{\partial}   % Boundary
\newtheorem{theorem}{Theorem}[section]
\newtheorem{proposition}[theorem]{Proposition}
\newtheorem{lemma}[theorem]{Lemma}
\newtheorem{corollary}[theorem]{Corollary}
\newtheorem*{namedtheorem}{\theoremname}
\newcommand{\theoremname}{testing}
\theoremstyle{definition}
\newtheorem{definition}[theorem]{Definition}
\title{Hyperbolic Twisted Torus Links}
\author{Thiago de Paiva}
\address[]{School of Mathematics, Monash University, VIC 3800, Australia }
\email[]{thiago.depaivasouza@monash.edu}
\begin{document}

\begin{abstract}
The twisted torus link $T(p, q; r, s)$ is obtained by 
twisting $r$ parallel strands of the $(p, q)$-torus link a total of $s$ full times. In this paper we find all twisted torus links which are hyperbolic for $\vert s\vert >3$.
\end{abstract} 

\maketitle

\section{Introduction}
A \emph{torus link} is a link that can be embedded on the surface of an unknotted torus
$T$ in $S^3$. A torus link is denoted by $T(p,q)$ with $p, q$ integers. Here, $p$, $q$ denote the number of times that $T(p, q)$ wraps around the longitude, meridian, respectively, of $T$. The torus link $T(p, q)$  has $\gcd(p, q)$ link components. Thus if $\gcd(p, q) = 1$, then $T(p, q)$ is a knot.

The twisted torus knots $T(p, q; r, s)$ are obtained by performing a sequence of $s$ full twists on $r$ adjacent strands of $(p, q)$-torus knots. They were introduced by Dean in his doctoral thesis~\cite{Thesis} when he was investigating Seifert fibered spaces obtained by Dehn fillings.

Twisted torus knots have been found to have many interesting properties. They are known to be among those knots built with the least amount of tetrahedra \cite{simplest, nextsimplest}.
Their volumes ~\cite{generalizedtwistedtoruslinks}, knot Floer homology \cite{homology}, bridge spectra ~\cite{Bridge}, and Heegaard splittings~\cite{Heegaard} have been studied.

By work of Thurston~\cite{Thurston}, any knot in the 3-sphere can be classified geometrically as exactly one of a torus knot, a satellite knot, or a hyperbolic knot. 
The geometric classification of twisted torus knots has been studied by many authors; see particularly work of Lee~\cite{hyperbolicity, cable, LeeTorusknotsobtained, unknotted, Positively, torusTwistedtorusknots, Composite, Knottypes}, Morimoto~\cite{tangle}, Morimoto and Yamada~\cite{Anote}, Guntel~\cite{Guntel}, Thiago de Paiva~\cite{dePaiva:Unexpected, de2021hyperbolic, LeeThiago}, etc.

A natural generalization of twisted torus knots are twisted torus links, which are obtained by allowing $p, q$ to be not necessarily coprime in the definition of twisted torus knots. 

There has been great progress in the geometric classification of twisted torus knots, but there has been no result addressing the geometric classification of twisted torus links.

In this paper we classify all twisted torus links with $\vert s\vert > 3$ by proving the following theorem.

\begin{theorem}\label{maintheorem}
Consider $p, q, r$ integers greater than one such that $p+q \geq r >1$.
For $\vert s\vert > 3$, the twisted torus link $$T(p, q; r, s)$$ is hyperbolic 
if and only if $gcd(p, q) = 2$, $r$ is odd, different from $p\pm 1$ and $p$, and if $q>2$ then $r$ is not of the form $kq\pm 1$ with $k\geq 1$.
\end{theorem}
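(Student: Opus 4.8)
The plan is to decide hyperbolicity through Thurston's trichotomy~\cite{Thurston}: the complement of $T(p,q;r,s)$ is hyperbolic exactly when it is irreducible, atoroidal, anannular and not Seifert fibered, so the theorem reduces to showing that the stated arithmetic conditions on $(p,q,r)$ single out precisely the links that are neither torus links nor nontrivial satellites, together with an argument that for $|s|>3$ no other obstruction survives. The geometric input I would exploit is that $T(p,q;r,s)$ is $(-1/s)$-Dehn surgery on the unknotted circle $C$ bounding a disk that meets $T(p,q)$ transversally in the $r$ twisted strands; thus I would organize the proof into two pillars: a combinatorial recognition of all torus-link and satellite cases (the \textbf{only if} direction), and a surgery argument showing that everything else is hyperbolic once $|s|$ is large (the \textbf{if} direction).

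For the \textbf{only if} direction I would exhibit, in each excluded case, an explicit torus-link structure or an essential torus or annulus. Twisting all $p$ strands yields a torus link (explicitly $T(p,q+ps)$ in the standard convention), and the neighbouring values $r=p\pm1$ collapse to torus links after the one-strand braid isotopies of~\cite{Anote,tangle}; this handles $r\in\{p-1,p,p+1\}$. When $q>2$ and $r=kq\pm1$ the $r$ strands assemble into a cabling annulus and $T(p,q;r,s)$ is a cable, hence a satellite, by the cabling analysis of Lee~\cite{cable}; the exception for $q=2$ is forced, since there $\{kq\pm1\}$ is exactly the set of odd numbers, which are the hyperbolic ones. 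Finally I would treat the parity/divisibility conditions by writing $T(p,q)$ as the $\gcd(p,q)$-cable of $T(p/\gcd(p,q),\,q/\gcd(p,q))$ and tracking the companion torus under the twisting: when $\gcd(p,q)\neq2$, or when $\gcd(p,q)=2$ but $r$ is even, the twist disk can be isotoped compatibly with the parallel sheets so that a companion torus survives (a satellite, or a torus link in the degenerate cases), whereas $\gcd(p,q)=2$ with $r$ odd forces $C$ to separate the two sheets and destroys every such torus. The knot case $\gcd(p,q)=1$ is absorbed here as well, via the torus-knot and cable classifications of~\cite{cable,torusTwistedtorusknots,Knottypes}.

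For the \textbf{if} direction I would prove that when $\gcd(p,q)=2$, $r$ is odd, $r\notin\{p-1,p,p+1\}$, and (for $q>2$) $r\neq kq\pm1$, the link $J=T(p,q)\cup C$ is hyperbolic: the diagram is prime and non-split, giving irreducibility, and the companion-torus analysis above, run in reverse, shows that any essential torus or annulus in $J$ would have to be one of the surfaces already ruled out by these hypotheses. With $J$ hyperbolic, Thurston's hyperbolic Dehn surgery theorem makes all but finitely many fillings $(-1/s)$ hyperbolic; to pin the threshold at $|s|>3$ uniformly in $(p,q,r)$ I would estimate the length of the slope $-1/s$ on a maximal horospherical cusp of $C$, exploiting the relatively large cusp of the augmenting circle, and invoke the quantitative $6$-theorem of Agol and Lackenby once this length exceeds $6$.

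The step I expect to be the main obstacle is the completeness and uniformity of this essential-surface bookkeeping. Showing that the four listed families are \emph{exactly} the non-hyperbolic ones requires ruling out every other essential torus and annulus in $J$ uniformly in $p,q,r$ — in particular verifying that no unexpected cabling or swallow--follow torus hides among the permitted parameters — and separately guaranteeing that the surgery slope is long enough for \emph{all} admissible $(p,q,r)$ once $|s|\geq4$, rather than only past some larger parameter-dependent bound. The parity argument around $\gcd(p,q)=2$ and $r$ odd, which must certify that breaking the symmetry between the two parallel sheets is the only way to avoid a companion torus, is where I expect the analysis to be most delicate.
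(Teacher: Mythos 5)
Your overall two--pillar structure (exhibit essential surfaces in the excluded cases; prove the augmented link $T(p,q)\cup J$ is hyperbolic and then control Dehn fillings) matches the paper, but the step that actually produces the sharp, uniform threshold $\vert s\vert >3$ is where your proposal has a genuine gap. The paper does \emph{not} use Thurston's hyperbolic Dehn surgery theorem or the Agol--Lackenby $6$-theorem: it observes that the $1/0$ filling of $M(p,q,r)$ along $J$ returns the torus link complement, which contains an essential annulus, and then invokes the combinatorial theorems of Gordon and Wu on annular/toroidal Dehn fillings \cite{GordonToroidal, GordonAnnular} (together with \cite{Wulaminations, GordonWu} for irreducibility and boundary irreducibility) to conclude that any filling slope $1/s$ producing an essential torus or annulus satisfies $\Delta(1/0,1/s)=\vert s\vert\leq 3$. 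This is a purely combinatorial distance bound, uniform in $(p,q,r)$ by construction. Your route cannot deliver this: Thurston's theorem gives only ``all but finitely many slopes'' with no control on the exceptional set, and the $6$-theorem requires showing that the slope $\mu+s\lambda$ has length greater than $6$ on a maximal cusp of the $J$-cusp \emph{uniformly over the infinite family} $M(p,q,r)$. The phrase ``exploiting the relatively large cusp of the augmenting circle'' is doing all the work here and is unjustified: $J$ is not an augmenting circle in the usual sense, since the disk it bounds meets the link in $r$ points with $r$ unbounded, so the known cusp-geometry estimates for (fully) augmented links do not apply, and there is no a priori lower bound on the cusp area or on the height of the cusp torus in the $\lambda$-direction as $p,q,r\to\infty$. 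Even in settings where such estimates exist, the $6$-theorem typically yields thresholds like $\vert s\vert\geq 6$, not the sharp $\vert s\vert\geq 4$ claimed by the theorem; you flag this uniformity issue yourself as ``the main obstacle,'' but flagging it does not close it.

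A secondary error: in the only-if direction you assert that $r=p\pm 1$ ``collapses to torus links'' via one-strand braid isotopies. That is the right picture for twisted torus \emph{knots}, but here $\gcd(p,q)=2$ and the links $T(p,q;p\pm1,s)$ are not torus links in general: the paper's Proposition~\ref{propositionp} shows that one component $L_i=T(p/2,q/2)$ has linking number $p/2$ with $J$, so after surgery it becomes the torus knot $T(p/2,\,q/2+sp/2)$ sitting on a torus that survives the filling, and non-hyperbolicity is certified by the resulting essential annulus (a satellite/cable structure), not by recognizing a torus link. Similarly, your treatment of $\gcd(p,q)>2$ and of $\gcd(p,q)=2$ with $r$ even should be phrased as the survival of one of the annuli $F-N(T(p,q))$ disjoint from $J$ (the paper's Propositions~\ref{greater2} and~\ref{equalto2}); the ``companion torus'' language is close in spirit, but the actual certificates are annuli with boundary on two different link components, which is what makes the argument work for links rather than knots.
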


To prove this result, we consider the hyperbolization theorem proved by  Thurston \cite{Thurston}, which says: a compact 3-manifold $M$ with nonempty torus boundary
has interior admitting a complete hyperbolic structure if and only if it is
irreducible, boundary irreducible, atoroidal, and anannular. Irreducible, boundary irreducible, atoroidal, anannular means that $M$ doesn't contain any essential 2-sphere, disk, torus, annulus, respectively.
To prove that the twisted torus links with parameters not included in the hypotheses of Theorem~\ref{maintheorem}
are not hyperbolic, we find essential surfaces in their knot exterior. Then, we consider the hyperbolic twisted torus links of 
Theorem~\ref{maintheorem}.
To do this, we first study the hyperbolicity of the parental link associated with twisted torus link $T(p, q; r, s)$ which is the union of the torus link $T(p, q)$ and a circle $J$ encircling $r$ adjacent strands of this torus link. Then, we use some results of Gordon and Wu to study the slopes which produce hyperbolic Dehn fillings along $J$.

Positive twisted torus links, which are obtained by performing  positive full twists instead of negative full twists, are also T-links, as introduced by Birman and Kofman to describe Lorenz links ~\cite{newtwis}, for the particular case $r<p$. 
More precisely, if $s\geq 0$ and $r<p$, then the twisted torus link $T(p, q; r, s)$ is equal to the T-link $T((r, rs), (p, q))$.
In fact, as pointed out by Birman and Kofman \cite{newtwis}, T-links are a natural generalization of twisted torus links. 
Lorenz links are knotted closed periodic orbits in the flow of the Lorenz system. The Lorenz system is a system of three ordinary differential equations in $\mathbb{R}^3$ introduced by the meteorologist Edward Lorenz to predict weather patterns.
More specifically, Birman and Kofman proved that Lorenz links coincide with T-links. In the same paper they proved that twisted torus links are prime, fibered, nonamphicheiral, and have positive signature. They also establish some
dualities between positive twisted torus links and general twisted torus links and study the Jones polynomials of twisted torus links.

As an application of the last theorem, we obtain the following immediate corollary for the geometric classification of T-links. 

\begin{corollary}Consider $p, q, r$ integers greater than one such that $gcd(p, q)>1$ and $p > r >1$ . For $\vert s\vert > 3$, the T-link $$T((r, rs), (p, q))$$ is hyperbolic 
if and only if $gcd(p, q) = 2$, $r$ is odd and different from $p- 1$, and if $q>2$ then $r$ is not of the form $kq\pm 1$ with $k\geq 1$.
\end{corollary}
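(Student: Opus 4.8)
The plan is to reduce the statement to Theorem~\ref{maintheorem} by means of the identification between positive twisted torus links and T-links recorded above. Recall that for $s \geq 0$ and $r < p$ one has $T((r, rs), (p, q)) = T(p, q; r, s)$. Since T-links carry only positive crossings, the natural range here is $s > 0$, so that under the standing hypothesis $|s| > 3$ we are looking at $s \geq 4$, and the identification applies verbatim because $p > r > 1$ gives $r < p$. Thus deciding hyperbolicity of $T((r, rs), (p, q))$ is literally the same problem as deciding hyperbolicity of $T(p, q; r, s)$, and the whole content of the corollary will follow once the hypotheses and the exceptional set of Theorem~\ref{maintheorem} are transcribed into the T-link notation.

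First I would verify that the hypotheses of Theorem~\ref{maintheorem} are satisfied. The integers $p, q, r$ are all greater than one by assumption, so it remains only to check the inequality $p + q \geq r$. This is automatic: from $p > r$ we get $p \geq r + 1$, and since $q \geq 2$ we obtain $p + q \geq r + 3 > r$. Hence Theorem~\ref{maintheorem} applies directly to $T(p, q; r, s)$, and it suffices to rewrite its conclusion under the additional constraint $p > r$.

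Finally I would simplify the list of conditions. Theorem~\ref{maintheorem} asserts that $T(p, q; r, s)$ is hyperbolic if and only if $\gcd(p, q) = 2$, $r$ is odd, $r \notin \{p - 1, p, p + 1\}$, and, when $q > 2$, $r \neq kq \pm 1$ for every $k \geq 1$. Because $r < p$, both $r = p$ and $r = p + 1$ are impossible, so the requirement $r \notin \{p - 1, p, p + 1\}$ collapses to the single condition $r \neq p - 1$; all other conditions are unchanged, giving exactly the list in the corollary. The only point demanding care is the bookkeeping of the sign of $s$, that is, confirming that the Birman--Kofman identification $T((r, rs), (p, q)) = T(p, q; r, s)$ genuinely holds throughout the range of parameters under consideration. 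Once this is settled, the statement is an immediate transcription of Theorem~\ref{maintheorem}, so I expect no substantive obstacle.
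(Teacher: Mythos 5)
Your proposal is correct and is essentially the paper's own (implicit) argument: the paper states this corollary as an immediate consequence of Theorem~\ref{maintheorem} via the Birman--Kofman identification $T(p,q;r,s)=T((r,rs),(p,q))$ for $r<p$, with the conditions $r\neq p$, $p+1$ becoming vacuous because $r<p$. Your additional checks (that $p+q\geq r$ follows from $p>r$, and the caveat about the sign of $s$ in the identification) are exactly the right bookkeeping and match the intended reading of the statement.
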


\subsection{Acknowledgment}The valuable discussions that I had with Jessica Purcell, my supervisor, and Sangyop Lee were very useful for this work.

\subsection{Data Availability} Data sharing is not applicable to this article as no datasets were generated or analysed during
the current study.

\section{Twisted Torus Links}

In this section we precisely define twisted torus links and set up notation. 

Throughout, let $F$ be an unknotted torus in $S^3$ containing the torus link $T(p,q)$, where $p$ and $
q$ are positive integers greater than one. Let $D$ be a disk intersecting $F$ in an arc with $r$ points of $T(p, q)$ in its interior, where $p + q \geq r \geq 2$. Let $J$ denote the boundary of $D$: $J=\bdy D$.

For a non-zero integer $s$, the twisted torus link $T(p, q; r, s)$ is obtained by performing $(1/s)$-Dehn surgery on the unknotted circle $J$.

\begin{definition}
  The manifold $S^3-(N(T(p, q)\cup J)$ is denoted by $M(p, q, r)$; here $N(\cdot)$ denotes a regular neighbourhood.
\end{definition}

If $r = p$, then the twisted torus link $T(p, q; r, s)$ is the torus link $T(p, q + rs)$. So, we assume that $r \neq p$.

When $p$ and $q$ are coprime, the twisted torus link $T(p, q; r, s)$ is the usual twisted torus knot $T(p,q;r,s)$ (its definition can also be seen in \cite{hyperbolicity}); note we are using the same notation.

If $gcd(p, q) = d$, then the torus link $T(p, q)$ has $d$ components. Thus, the twisted torus link $T(p, q; r, s)$ also has $d$ components.  

In this paper we will study the cases when $gcd(p, q) = d>1$.

The following proposition has \cite[Lemma 5]{Knottypes} as a specific case.

\begin{proposition}\label{Prop:(q,p)=(p,q)}
The twisted torus links $T(p, q; r, s)$ and $T(q, p; r, s)$ are equivalent.
\end{proposition}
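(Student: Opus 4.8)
The plan is to exhibit an explicit orientation-preserving homeomorphism of $S^3$ carrying the whole configuration $(T(p,q),J)$ to a configuration $(T(q,p),J')$, where $J'$ encircles $r$ adjacent strands of $T(q,p)$, and then to invoke naturality of Dehn surgery. Realize $S^3$ as the unit sphere $\{(z,w)\in\CC^2 : \vert z\vert^2+\vert w\vert^2=1\}$ and take $F$ to be the Clifford torus $\{\vert z\vert=\vert w\vert=1/\sqrt 2\}$, so that $S^3=V_1\cup_F V_2$ is the standard genus-one Heegaard splitting and $T(p,q)$ is parametrized on $F$ by $\theta\mapsto \tfrac{1}{\sqrt 2}(e^{ip\theta},e^{iq\theta})$. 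First I would consider the map $h\from S^3\to S^3$ given by $h(z,w)=(w,z)$. As a real-linear map of $\CC^2=\RR^4$ it is the permutation interchanging the two coordinate planes, an even permutation, hence $h$ is orientation-preserving; it preserves $F$, interchanges the meridian and longitude of $F$ (equivalently it swaps $V_1$ and $V_2$), and carries $T(p,q)$ to $T(q,p)$.

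The second step is to arrange that $h$ also sends the twisting disk $D$ to a disk encircling $r$ adjacent strands of $T(q,p)$. By definition $D$ meets $F$ in an embedded arc $\alpha$ whose interior crosses $r$ consecutive points of $T(p,q)$, and after an isotopy supported near $D$ we may take these $r$ strands to be parallel in a neighbourhood of $D$. Since $h$ restricts to a homeomorphism of $F$ taking $T(p,q)$ to $T(q,p)$, the image $h(D)$ meets $F$ in the arc $h(\alpha)$, whose interior crosses the $r$ corresponding consecutive points of $T(q,p)$, and these remain parallel near $h(D)$. Hence $J':=h(J)=\bdy\,h(D)$ is an unknotted circle encircling $r$ adjacent strands of $T(q,p)$, so $(T(q,p),J')$ is isotopic to the standard configuration defining $T(q,p;r,s)$.

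Finally I would appeal to naturality of Dehn surgery. The link $T(p,q;r,s)$ is obtained from $(S^3,J)$ by $(1/s)$-surgery on $J$, and $T(q,p;r,s)$ from $(S^3,J')$ by $(1/s)$-surgery on $J'$. Since $h$ is a homeomorphism with $h(J)=J'$, it carries the meridian and the Seifert-framing longitude of $J$ to those of $J'$; being orientation-preserving, it preserves their signs simultaneously and therefore sends the slope $1/s$ to $1/s$. Thus $h$ induces a homeomorphism of $S^3$ taking the image of $T(p,q)$ after the twist to the image of $T(q,p)$ after the twist, giving $T(p,q;r,s)\cong T(q,p;r,s)$.

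The main obstacle is the second step: verifying that ``encircling $r$ adjacent strands'' is genuinely preserved, i.e. that $h(\alpha)$ crosses $r$ consecutive (not merely $r$ arbitrary) strands of $T(q,p)$ and that they stay parallel. This is precisely where the flat-torus model is convenient, since on $F$ adjacency amounts to $\alpha$ being a short transverse arc meeting no further strand between its endpoints, a property manifestly preserved by the homeomorphism $h\vert_F$. A secondary point to check is that the surgery coefficient is unchanged, which reduces to the fact that $h$ is orientation-preserving and satisfies $h(F)=F$; this is what forces the equivalence to preserve $s$ rather than reverse it.
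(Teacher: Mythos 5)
Your proof is correct and is essentially the paper's own argument: your map $h(z,w)=(w,z)$ on the unit sphere in $\CC^2$ is exactly the paper's 180-degree rotation about the diagonal of $F$ (its Figure 1), which switches the meridian and longitude of the Heegaard torus, carries $T(p,q)\cup J$ to $T(q,p)\cup J'$ encircling $r$ adjacent strands, and preserves the $(1/s)$-surgery slope. The only difference is presentational: you verify orientation and slope preservation explicitly in coordinates, while the paper treats the rotation as an ambient isotopy and reads off the same conclusion from the picture.
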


\begin{proof}The manifold $M(p, q, r)$ is obtained from the link $T(p, q)\cup J$ with the circle $J$ encircling $r$ parallel strands of $T(p, q)$.
Consider that the torus link $T(p, q)$ lies in the unknotted torus $F$. After switching the meridian to the longitude and the longitude to the meridian of $F$, the torus link $T(p, q)$ becomes the torus link $T(q, p)$ and the circle $J$ encircles $r$ parallel strands of $T(q, p)$ as described in Figure~\ref{HTTL3}. Thus, after this isotopy the manifold $M(p, q, r)$ becomes the manifold $M(q, p, r)$. Therefore, $T(p, q; r, s)$ and $T(q, p; r, s)$ are also equivalent.
\end{proof}

\begin{figure}
\includegraphics[scale=0.38]{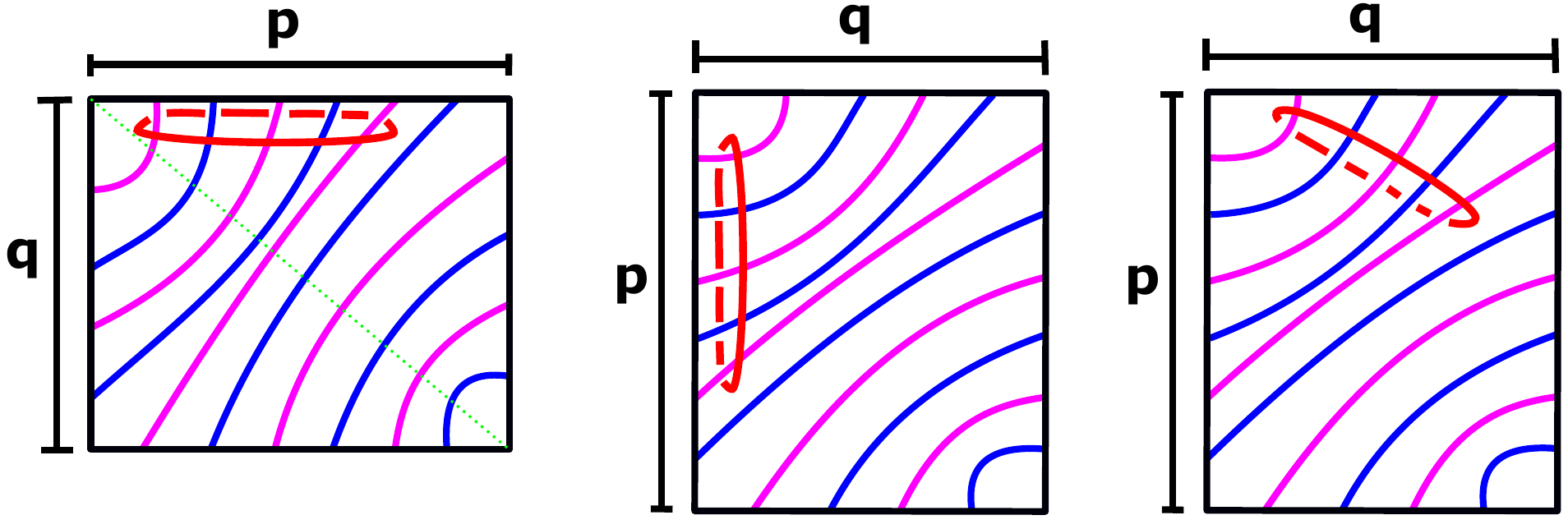} 
  %% Put the caption after the figure
  \caption{The sides of the black square are glued together to give the unknotted torus $F$ in which the torus link $T(p, q)$ lies. The homeomorphism of $S^3$ that switches the meridian and longitude of $F$ can be expressed by a 180-degree rotation around the green diagonal of the first drawing, as illustrated in the second picture.}
  %% Put the label at the very end -- otherwise the latex compiler can have problems. 
  \label{HTTL3}
\end{figure}

Thus, from now on, we assume that $p\geq q$.
\section{Non-Hyperbolic Twisted Torus Links}
In this section we find the parameters $(p, q;r, s)$ which produce non-hyperbolic twisted torus links.

\begin{lemma}\label{linkingnumber}
Consider that $gcd(p, q) = d > 1$. Then, the linking number between any two link components of $T(p, q)$ is equal to $pq/d^2$.
\end{lemma}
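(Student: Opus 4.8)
The plan is to exploit that the $d=\gcd(p,q)$ components of $T(p,q)$ are disjoint parallel curves on the Heegaard torus and to compute the relevant linking number homologically. Let $F=\bdy V$ be the standard unknotted torus bounding the solid torus $V\subset S^3$, with meridian $\mu$ bounding a disk in $V$ and longitude $\lambda$. Each component of $T(p,q)$ is a $(p/d,q/d)$--curve, so it carries the class $\frac{p}{d}\mu+\frac{q}{d}\lambda$ in $H_1(F)$, and the $d$ components are mutually disjoint parallel copies of this curve; I orient them coherently, so that all linking numbers below are nonnegative. Concretely I will compute, for one fixed component $K$, its linking number with the union of the remaining components; for the two--component links to which Theorem~\ref{maintheorem} applies (where $d=2$) this is exactly the linking number between the two components, and the same computation yields the stated value in general.

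The key input is the linking number of a single component with the core $c$ of $V$. Since $\mu$ bounds a meridian disk of $V$ while $\lambda$ is parallel to $c$ and bounds in the complementary solid torus, we have $\mathrm{lk}(\mu,c)=1$ and $\mathrm{lk}(\lambda,c)=0$; hence, by linearity of the linking number, a component $K$ of class $\frac{p}{d}\mu+\frac{q}{d}\lambda$ satisfies $\mathrm{lk}(K,c)=\frac{p}{d}$. More generally, any curve $C$ lying on $F$ with class $a\mu+b\lambda$ may be pushed into the interior of $V$, where it becomes homologous to $b\,[c]$; linearity then gives $\mathrm{lk}(K,C)=b\cdot\mathrm{lk}(K,c)=\frac{p}{d}\,b$, so only the longitudinal coefficient of $C$ matters.

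It remains to apply this with $C$ the union of the remaining $d-1$ components of $T(p,q)$. This sublink lies on $F$ with total class $\frac{(d-1)p}{d}\mu+\frac{(d-1)q}{d}\lambda$, so its longitudinal coefficient is $b=\frac{(d-1)q}{d}=q-\frac{q}{d}$, and therefore
\[
\mathrm{lk}\!\left(K,\,T(p,q)\setminus K\right)=\frac{p}{d}\left(q-\frac{q}{d}\right),
\]
which is precisely the asserted value; equivalently, each individual pair of components links $\frac{p}{d}\cdot\frac{q}{d}$ times and there are $d-1$ such equal contributions. The only steps needing care are fixing the orientations and the two base linking numbers $\mathrm{lk}(\mu,c)=1$, $\mathrm{lk}(\lambda,c)=0$, and checking that the remaining components can be pushed into $V$ disjointly from $K$; once these are in place the formula follows immediately from linearity, so no genuine obstacle arises beyond this bookkeeping.
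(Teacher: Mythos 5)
Your proposal is correct, and it takes a genuinely different route from the paper. The paper's proof is a two-line citation argument: it quotes Theorem 4.2 of \cite{linking} for the total linking number $p\left(q-\frac{q}{d}\right)$ of $T(p,q)$ and then divides by $d$, since the $d$ components contribute equally by symmetry. You instead give a self-contained homological computation on the Heegaard torus: from $\mathrm{lk}(\mu,c)=1$, $\mathrm{lk}(\lambda,c)=0$ you get $\mathrm{lk}(K,c)=\frac{p}{d}$, and pushing the remaining components into the solid torus and using that $\mathrm{lk}(K,\cdot)$ is a homomorphism on $H_1(S^3\setminus K)$ yields the stated value; all these steps are sound. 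What your approach buys is independence from the external reference and, more importantly, transparency about a subtlety that both the lemma's wording and the paper's proof gloss over: your computation shows each individual pair of components links exactly $\frac{p}{d}\cdot\frac{q}{d}=pq/d^2$ times, so the quantity $\frac{p}{d}\left(q-\frac{q}{d}\right)=(d-1)\,pq/d^2$ is really the linking number of one component with the \emph{union} of the other $d-1$, and coincides with the pairwise linking number only when $d=2$. The paper's division-by-$d$ argument implicitly computes the same per-component total (note this also forces the cited ``total linking number'' to count ordered pairs), so your reading is faithful to what the paper actually uses: the exact value is invoked only when $d=2$ (e.g., $\frac{p}{2}$ in Lemma~\ref{annulitwocomponent2}), while for $d>2$ only positivity is needed, which holds for either interpretation. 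It would strengthen your write-up to flag this discrepancy for $d\geq 3$ explicitly rather than folding it into the word ``equivalently,'' but as a proof of the lemma as used in the paper it is complete.
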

\begin{proof}
Each link component of $T(p, q)$ is a $(p/d, q/d)$-torus knot whose exterior contains a winding annulus at slope $(p/d)(q/d)$. A second component of $T(p, q)$ lies in that winding annulus, hence it intersects a Seifert surface for the first component 
$(p/d)(q/d)$ times. Therefore, the linking number between those two components is $pq/d^2$.
\end{proof}

\begin{lemma}\label{irreducible,boundaryirreducible}
Consider that $gcd(p, q) = d>1$. Then, $S^3 - N(T(p, q))$ has no essential 2-spheres and no essential disks. Therefore, $S^3 - N(T(p, q))$  is irreducible and  boundary irreducible.
\end{lemma}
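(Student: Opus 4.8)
The plan is to treat the two claims separately, reducing each to a statement about how the boundary curves of $N(T(p,q))$ sit in the homology of the complement, and invoking Lemma~\ref{linkingnumber} to see that the relevant linking numbers do not vanish.

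First I would prove irreducibility by showing that $T(p,q)$ is non-split. Since every embedded $2$-sphere in $S^3$ bounds a ball on each side (Alexander's theorem), a $2$-sphere $S$ in $S^3 - N(T(p,q))$ fails to bound a ball in the complement exactly when each of the two balls it cobounds in $S^3$ meets $T(p,q)$; that is, $S^3 - N(T(p,q))$ contains an essential $2$-sphere if and only if $T(p,q)$ is split. When $d=1$ the link is a single knot and is trivially non-split, so assume $d\geq 2$. By Lemma~\ref{linkingnumber} the linking number of any two components of $T(p,q)$ equals $\frac{p}{d}\left(q-\frac{q}{d}\right)$, which is strictly positive because $p,q>1$ and $d\geq 2$. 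A split link has a pair of components lying in disjoint balls and hence of linking number zero, so the nonvanishing of all pairwise linking numbers forces $T(p,q)$ to be non-split. Therefore the complement is irreducible.

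Next I would establish boundary irreducibility using a homological obstruction to compressing disks. Write the components as $K_1,\dots,K_d$, so that $H_1(S^3 - N(T(p,q)))\cong\ZZ^d$ is freely generated by the meridians $\mu_1,\dots,\mu_d$, and recall that the longitude satisfies $[\lambda_i]=\sum_{j\neq i}\mathrm{lk}(K_i,K_j)\,[\mu_j]$. Suppose $\bdy N(K_i)$ admits a compressing disk $D$, so that $\bdy D$ is an essential slope $a\mu_i+b\lambda_i$ that bounds $D$ in the complement and is therefore null-homologous. Expanding $a[\mu_i]+b\sum_{j\neq i}\mathrm{lk}(K_i,K_j)[\mu_j]=0$ in the basis of meridians gives $a=0$ and $b\,\mathrm{lk}(K_i,K_j)=0$ for every $j\neq i$. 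When $d\geq 2$ some linking number is nonzero by Lemma~\ref{linkingnumber}, forcing $b=0$; then $a=b=0$ contradicts that the slope is essential, so no compressing disk exists.

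The remaining case $d=1$ is where I expect the only real obstacle: there homology alone leaves open the longitudinal slope, since $[\lambda_i]=0$ and the equation above is satisfied by $a=0$ and $b$ arbitrary. Here I would instead appeal to the standard fact that the complement of a nontrivial knot is boundary irreducible --- a longitudinal compressing disk would exhibit $K_i$ as the unknot --- together with the observation that $T(p,q)$ with $p,q>1$ and $\gcd(p,q)=1$ is a nontrivial torus knot. Combining the two cases gives boundary irreducibility for all $d>0$, and with the irreducibility established above this completes the proof.
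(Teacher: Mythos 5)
Your proposal is correct and takes essentially the same approach as the paper: irreducibility is deduced from non-splitness via the positive pairwise linking numbers of Lemma~\ref{linkingnumber}, and boundary irreducibility from the fact that a compressing disc's boundary slope would have to be longitudinal, yielding either an unknotted component or vanishing linking numbers. If anything, your write-up is slightly more careful than the paper's, since you treat the $d=1$ case separately using nontriviality of the torus knot $T(p,q)$, whereas the paper's closing contradiction (zero linking number with the other components) implicitly assumes $d\geq 2$.
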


\begin{proof}By Lemma~\ref{linkingnumber}, any two link components of $T(p, q)$ have linking number greater than zero. Thus, $T(p, q)$ is nonsplittable, and so $S^3 - N(T(p, q))$ is irreducible.

If $S^3 - N(T(p, q))$ is boundary reducible, then there is a boundary reducible disk $D$ for $S^3 - N(T(p, q))$.  
Since $S^3 - N(T(p, q))$ is irreducible, it is not possible that $\partial D \subset \partial (S^3 - N(T(p, q)))$ does bound a disk $E$ on $\partial (S^3 - N(T(p, q)))$ but $E\cup D$ does not bound a 3-ball.
Consider $L_i$ the component of $\partial (S^3 - N(T(p, q)))$ in which $\partial D$ lies. Then, $\partial D$ does not bound a disk on $L_i$. Thus, $L_i$ would be trivial and $D$ would be its Seifert surface. Since $D$ is not punctured by the other components of $\partial (S^3 - N(T(p, q)))$, $L_i$ would have linking numbers equal to zero with the other components of $\partial (S^3 - N(T(p, q)))$, a contradiction.
\end{proof}

The manifold $F-N(T(p, q))$, where $F$ is the torus that the torus link $T(p, q)$ lies, is formed by $d$ annuli, where $gcd(p, q) = d>1$. We denote each of these annuli by $A^{i}_{T(p, q)}$ with $i = 1, \dots, d$. The annulus $A^{i}_{T(p, q)}$ has boundary components into two different link components of $T(p, q)$. 

\begin{lemma}\label{T(p, q)}
Each annulus $A^{i}_{T(p, q)}$ is essential in $S^3-N(T(p, q))$.
\end{lemma}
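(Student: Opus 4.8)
The plan is to verify the three defining properties of an essential surface directly for each annulus $A := A^{i}_{T(p, q)}$ in $M := S^3 - N(T(p, q))$: that it is incompressible, boundary-incompressible, and not boundary-parallel. Throughout I would use that $M$ is irreducible by Lemma~\ref{irreducible,boundaryirreducible}, and I would exploit the structural fact recorded just before the statement, namely that the two boundary circles of $A$ lie on two \emph{different} components of $\partial M$, say $\partial N(K_i)$ and $\partial N(K_j)$ (this presupposes $d>1$, which is the case under study). The core $c$ of $A$ is a $(p/d, q/d)$ curve on $F$, essential there and isotopic on $F$ to each link component.

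For incompressibility, suppose toward a contradiction that $D \subset M$ is a compressing disk for $A$. Then $\partial D$ is an essential simple closed curve in $A$, hence isotopic in $A$ to the core $c$; dragging $D$ along this isotopy, which stays in $A \subset M$, I may assume $\partial D = c$ with $D \subset M$. Since $D$ is disjoint from $N(T(p, q))$, in particular from $K_j$, the curve $c = \partial D$ bounds a disk in $S^3 - K_j$, forcing $\mathrm{lk}(c, K_j) = 0$. On the other hand $c$ is isotopic to $K_i$, so $\mathrm{lk}(c, K_j) = \mathrm{lk}(K_i, K_j) = \frac{p}{d}(q - \frac{q}{d})$ by Lemma~\ref{linkingnumber}, which is strictly positive for $d > 1$. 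This contradiction rules out a compressing disk.

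For the remaining two properties I would argue from the same observation about distinct boundary components. If $D$ were a boundary-compressing disk with $\partial D = \alpha \cup \beta$, where $\alpha$ is an essential arc of $A$ and $\beta \subset \partial M$, then $\alpha$ would be a spanning arc (the only essential arcs in an annulus), so its endpoints would lie on $\partial N(K_i)$ and $\partial N(K_j)$ separately; but then $\beta$ would be an arc in $\partial M$ joining these two distinct boundary tori, which is impossible, so $A$ is boundary-incompressible. Likewise, an isotopy realizing boundary-parallelism would keep each boundary circle of $A$ inside its own component of $\partial M$, placing the connected annulus $A$ inside a single boundary torus while its two boundary circles remained on the distinct tori $\partial N(K_i)$ and $\partial N(K_j)$, a contradiction; hence $A$ is not boundary-parallel.

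I expect the incompressibility step to be the main obstacle, since it is the only place requiring genuine geometric input, supplied here by the linking-number computation of Lemma~\ref{linkingnumber}; once that is in hand, boundary-incompressibility and non-boundary-parallelism follow formally from the fact that $\partial A$ meets two distinct components of $\partial M$.
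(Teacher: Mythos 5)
Your proof is correct, and it coincides with the paper's on two of the three properties: the paper also dispatches boundary-incompressibility and non-boundary-parallelism purely from the fact that $\partial A^{i}_{T(p,q)}$ lies on two distinct components of $\partial M$ (the paper states this in one sentence; you supply the standard details about spanning arcs and connectedness, which is fine). Where you genuinely diverge is incompressibility. The paper compresses the annulus along the putative disk $D$ to obtain two properly embedded disks $D_1, D_2$ with $\partial A^{i}_{T(p,q)} = \partial D_1 \cup \partial D_2$, and then invokes irreducibility and \emph{boundary-irreducibility} of $S^3-N(T(p,q))$ (Lemma~\ref{irreducible,boundaryirreducible}) to force $\partial D_1, \partial D_2$ to bound disks on the boundary tori, contradicting the essentiality of $\partial A^{i}_{T(p,q)}$ there. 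You instead argue directly: $\partial D$ is isotopic in the annulus to the core $c$, so $c$ bounds a $2$-chain (namely $D$ together with the sub-annulus between $\partial D$ and $c$) disjoint from the other component $K_j$, forcing $\mathrm{lk}(c,K_j)=0$, against Lemma~\ref{linkingnumber}. The two routes are cousins --- the paper's boundary-irreducibility lemma is itself proved from Lemma~\ref{linkingnumber} --- but yours bypasses Lemma~\ref{irreducible,boundaryirreducible} entirely and makes explicit exactly where $d>1$ enters (positivity of $\frac{p}{d}(q-\frac{q}{d})$), whereas the paper's version is shorter on the page because it reuses its already-established lemma. One small point to tighten in your write-up: when you assert $\mathrm{lk}(c,K_j)=\mathrm{lk}(K_i,K_j)$, note that the isotopy from $c$ to $K_i$ can be taken inside the sub-annulus of $F$ between them, which misses $K_j$; an isotopy of $c$ to $K_i$ chosen carelessly on $F$ could sweep across other link components and would not preserve the linking number.
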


\begin{proof}Since $A^{i}_{T(p, q)}$  has boundaries in two different components, it is boundary incompressible and not boundary parallel. Also, the boundaries of $A^{i}_{T(p, q)}$ are essential in the tori that they lie. If there is a compression disk $D$ for $A^{i}_{T(p, q)}$, then we obtain two new disks, $D_1$ and $D_2$, by  doing a surgery in $A^{i}_T(p, q)$ along $\partial D$. We have that $\partial A^{i}_{T(p, q)} = \partial D_1 \cup \partial D_2$. Since $S^3-N(T(p, q))$ is irreducible and boundary irreducible by Lemma~\ref{irreducible,boundaryirreducible}, $D_1$ and $D_2$ bound disks on the tori that they lie, a contradiction. Thus, $A^{i}_{T(p, q)}$ is incompressible. Therefore, $A^{i}_{T(p, q)}$ is essential in $S^3-N(T(p, q))$.
\end{proof}

\begin{proposition}\label{greater2}
If $gcd(p, q) = d > 2$, then the twisted torus link $T(p, q; r, s)$ is not hyperbolic. 
\end{proposition}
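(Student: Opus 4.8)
The plan is to use Thurston's hyperbolization theorem in its contrapositive form: I will exhibit an essential annulus in $S^3 - N(T(p,q;r,s))$, so that this manifold is not anannular and the link cannot be hyperbolic. The natural candidates are the annuli $A^i_{T(p,q)}$, $i=1,\dots,d$, which Lemma~\ref{T(p, q)} shows to be essential in $S^3 - N(T(p,q))$; the whole point is that when $d>2$ at least one of them is untouched by the twisting.

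The key observation is that $J$ meets the torus $F$ in exactly two points. Indeed $D\cap F$ is a single arc $\alpha$, and since $F$ is closed its endpoints lie on $\bdy D=J$, so $J\cap F=\bdy\alpha$. These two points lie in the interiors of at most two of the $d$ annular regions of $F-N(T(p,q))$, hence $J$ is disjoint from at least $d-2$ of the annuli $A^i_{T(p,q)}$. Since $d>2$ I may fix one such annulus $A$ with $A\cap J=\emptyset$. It is worth noting that for $d=2$ this mechanism recovers exactly the parity condition of Theorem~\ref{maintheorem}: when $r$ is even the two points of $J\cap F$ fall in the same annulus and the remaining annulus is disjoint from $J$, whereas when $r$ is odd both annuli meet $J$.

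Because $T(p,q;r,s)$ is obtained from $T(p,q)\cup J$ by $1/s$-Dehn filling along $J$, and $A$ is disjoint from $J$, the annulus $A$ survives as a properly embedded annulus in $S^3 - N(T(p,q;r,s))$, lying in $M(p,q,r)$ away from the filling solid torus. Its two boundary curves still lie on distinct components of the link, since $s$ full twists realize the trivial permutation of the $r$ strands and so do not merge components; in particular $A$ is not boundary parallel. Moreover $A$ is already essential in $M(p,q,r)$: any compressing disk there is automatically disjoint from $J$ and would compress $A$ in $S^3-N(T(p,q))$, contradicting Lemma~\ref{T(p, q)}. To finish I must promote this to essentiality in the filled manifold, which I would do by first establishing the analogues of Lemmas~\ref{linkingnumber} and~\ref{irreducible,boundaryirreducible} for $T(p,q;r,s)$ and then rerunning the argument of Lemma~\ref{T(p, q)}.

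The main obstacle is exactly this last step, since Dehn filling can in principle create a compression of $A$. I would resolve it by taking a hypothetical compressing disk $E$ in $S^3-N(T(p,q;r,s))$, arranging $E$ to meet the filling solid torus in meridian disks, and using an innermost-curve and outermost-arc analysis to isotope $E$ off the filling region; this returns a compressing disk in $M(p,q,r)$ and contradicts the essentiality established above. I expect the restriction $\vert s\vert>3$ to enter here to control the filling slope and rule out the exceptional compressions. Granting this, $A$ is essential, $S^3-N(T(p,q;r,s))$ fails to be anannular, and Thurston's theorem yields that $T(p,q;r,s)$ is not hyperbolic.
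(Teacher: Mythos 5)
Your setup is exactly the paper's: since $J=\bdy D$ meets $F$ in only two points, at most two of the $d$ annuli $A^{i}_{T(p,q)}$ can meet $J$, so for $d>2$ some annulus $A$ is disjoint from $J$, survives the $(1/s)$-filling, and sits properly embedded with boundary on two distinct components of $T(p,q;r,s)$, hence is neither boundary parallel nor boundary compressible. The gap is your final step. You propose to show $A$ stays incompressible after filling by taking a compressing disk $E$, arranging it to meet the filling solid torus in meridian disks, and isotoping it off the filling region by innermost-curve and outermost-arc moves. That cannot work as stated: a compressing disk that only exists after Dehn filling must run over the filling solid torus essentially, and no isotopy removes those intersections --- this is exactly the phenomenon of exceptional surgery, and controlling it requires results of Gordon--Wu type (which the paper does invoke, but only later, for Theorem~\ref{main1}), not an elementary cut-and-paste argument. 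Worse, you expect the hypothesis $\vert s\vert>3$ to enter and save this step, but Proposition~\ref{greater2} carries no such hypothesis: it asserts non-hyperbolicity for every nonzero $s$, so an argument that genuinely needs $\vert s\vert>3$ proves a weaker statement than the one claimed.

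The paper sidesteps the difficulty with a dichotomy you are missing: one does not need $A$ to remain incompressible at all. If $A$ is incompressible after filling, it is essential, and the link is not hyperbolic. If instead $A$ compresses, then the boundary of the compressing disk is an essential loop in $A$, hence isotopic within $A$ to a boundary component of $A$, i.e.\ parallel to a component of the link $T(p,q;r,s)$; such a disk exhibits the link complement as boundary reducible, so the link is again not hyperbolic. Either horn of the dichotomy contradicts hyperbolicity, for every $s$, with no analysis of the filling slope required. Replacing your last two paragraphs with this observation turns your proposal into the paper's proof.
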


\begin{proof}
There are $d$ annuli of type $A^{i}_{T(p, q)}$ in $S-N(T(p, q))$. 
From the definition of $J$, it intersects the torus $F$ only at two points, hence J intersects at most two of the annuli. Thus, there exists an annulus $A^{j}_{T(p, q)}$ disjoint from $J$. From Lemma~\ref{T(p, q)}, $A^{j}_{T(p, q)}$ is essential in $M(p,q,r)$.

Under $(1/s)$-Dehn filling along $J$, the annulus $A^{j}_{T(p, q)}$ is transformed into a new annulus, called $A$. The annulus $A$ has boundary components on distinct link components of $T(p,q;r,s)$, hence it is neither boundary parallel nor boundary compressible. The boundary of any compression disk is an essential loop in $A$,
hence parallel to one of the components of the link $T(p,q;r,s)$. Thus, if such a compression disk exists, the link is boundary compressible hence not hyperbolic.
On the other hand, if there is no compression disk, then the annulus $A$ is essential. It follows that the link $T(p,q;r,s)$ is not hyperbolic in this case as well.
\end{proof}

\begin{proposition}\label{equalto2}
If $gcd(p, q) = 2$ and $r$ is even, then the twisted torus link $T(p, q;r, s)$ is not hyperbolic.
\end{proposition}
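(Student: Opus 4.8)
The plan is to mirror the argument of Proposition~\ref{greater2}: I want to exhibit an annulus of type $A^{i}_{T(p,q)}$ that is disjoint from $J$, so that it survives the $(1/s)$-Dehn filling and obstructs hyperbolicity. The difficulty compared to Proposition~\ref{greater2} is that now there are only $d=2$ such annuli, and a priori $J$ could meet both of them; the argument "$J$ meets at most two of the $d>2$ annuli" is no longer available. The new input, and the place where the hypothesis that $r$ is even enters, is a parity count showing that $J$ is in fact disjoint from one of the two annuli.

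First I would record the combinatorial structure of $T(p,q)$ when $\gcd(p,q)=2$. Here $p$ and $q$ are both even, $T(p,q)$ has two components $C_1,C_2$, each a $(p/2,q/2)$ curve on $F$, and the permutation $i\mapsto i+q \pmod p$ induced on the $p$ strands shows that along a meridian of $F$ the strands belong alternately to $C_1$ and $C_2$. Consequently the two complementary annuli $A^1_{T(p,q)},A^2_{T(p,q)}$ meet a meridian of $F$ in arcs that alternate: two consecutive gaps between strands lie in different annuli, because the component strand separating them has $A^1_{T(p,q)}$ on one of its two sides and $A^2_{T(p,q)}$ on the other.

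Next I would locate $J$ relative to these annuli. By construction $D\cap F$ is a single arc $\alpha$ whose interior crosses the $r$ twisted strands and whose endpoints form $J\cap F=\bdy\alpha$, lying in the two gaps immediately before the first and immediately after the last of the $r$ strands. By the alternation above these two gaps lie in the \emph{same} annulus precisely when $r$ is even. Hence, when $r$ is even, both points of $J\cap F$ lie in one annulus, say $A^1_{T(p,q)}$, and therefore $J$ is disjoint from the other annulus $A^2_{T(p,q)}$.

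Finally I would conclude exactly as in Proposition~\ref{greater2}. Since $A^2_{T(p,q)}$ is disjoint from $J$, by Lemma~\ref{T(p, q)} it is essential in $M(p,q,r)$; under the $(1/s)$-filling along $J$ it becomes an annulus $A$ with boundary on the two distinct components of $T(p,q;r,s)$, so $A$ is neither boundary parallel nor boundary compressible. If $A$ compresses, the boundary of a compressing disc is essential in $A$ and hence parallel to a link component, so the complement is boundary reducible; otherwise $A$ is essential. Either way $T(p,q;r,s)$ is not hyperbolic. I expect the main obstacle to be the third paragraph: making the alternation of the two annuli along the meridian fully rigorous and verifying that the endpoints of $\alpha$ genuinely occupy the outermost gaps, so that the parity of $r$ cleanly determines which annulus $J$ avoids.
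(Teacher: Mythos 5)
Your proposal is correct and follows essentially the same route as the paper: find one of the two annuli $A^{i}_{T(p,q)}$ disjoint from $J$, invoke Lemma~\ref{T(p, q)}, and argue that it persists as an essential (or boundary-compressing) annulus after the $(1/s)$-filling. In fact your parity argument---strands alternate between the two components along a meridian, so the gaps flanking the $r$ twisted strands lie in the same annulus exactly when $r$ is even---supplies the justification for the step the paper simply asserts (``Because $r$ is even, $J$ must be disjoint from one of these annuli''), so your write-up is, if anything, more complete.
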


\begin{proof}
As $gcd(p, q) = 2$, there are two annuli $A^{1}_{T(p, q)}$ and $A^{2}_{T(p, q)}$ associated with the torus link $T(p, q)$

Because $r$ is even, $J$ must be disjoint from one of these annuli, call it $A^{j}_{T(p, q)}$.  By Lemma~\ref{T(p, q)}, $A^{j}_{T(p, q)}$ is essential in $M(p,q,r)$.

As in the proof of Proposition~\ref{greater2}, the annulus $A^{j}_{T(p, q)}$ is transformed into a new annulus $A$
under Dehn surgery along $J$, with distinct boundary components. Hence it is neither boundary compressible nor boundary parallel. If it is compressible, then the compression disk has boundary parallel to a component of $T(p,q;r,s)$, so the link is boundary compressible and not hyperbolic. If $A$ is not compressible, then it is essential, and the link is not hyperbolic. 
\end{proof}

\begin{lemma}\label{LinkingNumber}
Suppose $\gcd(p,q)=2$, so that $T(p,q)$ has two components, denoted $L_1$ and $L_2$. Let $r>1$ be odd. Then the linking numbers $r_i = \mathrm{lk}(L_i,J)$ for $i=1,2$ satisfy:
\begin{itemize}
\item Both $r_1$ and $r_2$ are nonzero, with $r_1=r_2\pm 1$. Hence if $r_1$ is even, then $r_2$ is odd and vice versa.
\item The sum $r_1+r_2=r$.
\end{itemize}
\end{lemma}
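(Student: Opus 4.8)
The plan is to reduce both assertions to a single combinatorial count: how the $r$ strands of $T(p,q)$ pierced by the disk $D$ (recall $J=\bdy D$) distribute between the two components. First I would record the orientation bookkeeping. The $r$ points of $T(p,q)$ lying on the arc $D\cap F$ are all pierced by $D$ with the same sign, because near this arc the torus link is $r$ coherently oriented parallel strands of the $(p,q)$-curve. Consequently $\mathrm{lk}(L_i,J)$ is literally the number of strands of $L_i$ among these $r$ strands, with no cancellation, and summing over $i=1,2$ gives $r_1+r_2=r$ at once. This already yields the second bullet.

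For the first bullet I would use that, since $\gcd(p,q)=2$, the link $T(p,q)$ is two parallel copies of the torus knot $T(p/2,q/2)$ on $F$; call these copies $L_1$ and $L_2$. Two parallel, slightly offset copies of the same $(p/2,q/2)$-curve interleave along any short arc transverse to them: between two consecutive strands of $L_1$ lies exactly one strand of $L_2$, and vice versa. Hence, reading the $r$ strands met by $D\cap F$ in order, they alternate between $L_1$ and $L_2$. Because $r$ is odd, one component contributes $(r+1)/2$ strands and the other $(r-1)/2$; thus $\{r_1,r_2\}=\{(r-1)/2,(r+1)/2\}$. These are consecutive integers, so $r_1=r_2\pm 1$; their sum is the odd number $r$, so they have opposite parity; and since $r>1$ is odd we have $r\geq 3$, whence $(r-1)/2\geq 1$ and both are nonzero. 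All three conclusions follow.

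The step I expect to be the main obstacle is making the alternation rigorous, i.e.\ justifying that the two parallel components genuinely interleave along the transverse arc rather than, say, all strands of $L_1$ appearing before any of $L_2$. The clean way to see this is to pass to the standard braid (first-return) description: intersecting $T(p,q)$ with a meridian disk gives $p$ points cyclically indexed $0,1,\dots,p-1$, the component containing the strand through index $j$ is the orbit of $j$ under the return map $j\mapsto j+q\pmod p$, and since $\gcd(p,q)=2$ (so $p,q$ are even and $\langle q\rangle=\langle 2\rangle$ in $\mathbb{Z}/p\mathbb{Z}$, the index-two subgroup of even residues) these orbits are exactly the even and the odd indices. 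Consecutive indices therefore lie in different components, which is precisely the alternation claim. A secondary point to verify is that the ``adjacent strands'' of the definition---the points of $T(p,q)$ on the arc $D\cap F$---really correspond to consecutive indices in this ordering, which holds because adjacency on $F$ means consecutiveness along an arc transverse to the strands; the count of the previous paragraph then applies verbatim.
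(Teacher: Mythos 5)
Your proposal is correct and follows essentially the same route as the paper: all intersections of the strands with $D$ have the same sign, giving $r_1+r_2=r$, and the two components of $T(p,q)$ alternate among the $r$ adjacent strands, which together with $r$ odd forces $\{r_1,r_2\}=\{(r-1)/2,(r+1)/2\}$. The only difference is that you justify the alternation rigorously via the return-map/coset argument modulo $p$, whereas the paper simply asserts it and phrases the count in terms of the outermost intersection on $D$.
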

\begin{proof}
The disk $D$ with $\bdy D=J$ is a Seifert surface for $J$ meeting $L_i$ a total of $r_i$ times, and $r$ is the total number of intersections of $D$ with $L_1\cup L_2$, hence $r=r_1+r_2$. Since $r$ is greater than one and odd, $D$ meets $L_1\cup L_2$ at least three times, alternating with intersections of $L_1$ and $L_2$ on $D$. Thus if the outermost intersection is with $L_1$, say, then $r_1=r_2+1$. Similarly if $L_2$ is outermost.
\end{proof}

\begin{proposition}\label{propositionp}
Consider $gcd(p, q) = 2$. If $r = p-1$ or $p+1$, then the twisted torus link $T(p, q;r, s)$ is not hyperbolic.
\end{proposition}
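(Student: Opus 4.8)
The complementary-annulus argument of Propositions~\ref{greater2} and~\ref{equalto2} breaks down here: for odd $r$ the circle $J$ meets both annuli $A^1_{T(p,q)}$ and $A^2_{T(p,q)}$, so neither is disjoint from $J$ and that argument does not apply. The plan is instead to exhibit an essential torus, showing that $T(p,q;p\pm1,s)$ is a satellite (cable) link and hence toroidal. The starting point is a linking computation. Since $\gcd(p,q)=2$, the integer $p$ is even and each component $L_1,L_2$ meets a meridian disc of the solid torus bounded by $F$ that $T(p,q)$ traverses $p$ times in exactly $p/2$ co-oriented strands. By Lemma~\ref{LinkingNumber} the linking numbers satisfy $r_1+r_2=r$ and $|r_1-r_2|=1$, so for $r=p\pm1$ necessarily $\{r_1,r_2\}=\{p/2,\,p/2\mp1\}$; in particular one of them, say $r_2$, equals $p/2$. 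As the $p/2$ strands of $L_2$ are co-oriented, $\mathrm{lk}(J,L_2)=p/2$ forces $J$ to encircle \emph{all} of $L_2$, while it misses exactly one strand $t_0$ of $L_1$ (when $r=p-1$) or catches one extra strand of $L_1$ (when $r=p+1$). This ``$J$ surrounds an entire component'' phenomenon is exactly what singles out $r=p\pm1$ among the odd values.

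Next I would build a cabling torus from this fact. The component $L_2$ is the torus knot $T(p/2,q/2)$ and lies on $F$, hence on the boundary of a solid torus $U$ whose core is the core of the relevant side of $F$. Because $J$ encircles precisely the strands of $L_2$, performing $(1/s)$-surgery on $J$ twists the $(p/2,q/2)$ curve $L_2$ inside $U$ as a single unit, so $\partial U$ is the natural candidate for a satellite torus in the complement of the filled link. One then reads off that $T(p,q;p\pm1,s)$ is isotopic to a cable built on the twisted torus knot carried by $U$, with the lone strand $t_0$ supplying the second component; equivalently, one may try to isotope the filled link onto a knotted torus and recognize the structure directly.

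The crux, and the main obstacle, is that this candidate torus is \emph{not} literally disjoint from the link: the component $L_1$ has strands both inside and outside $U$ (the strand $t_0$ escapes), so $\partial U$ is punctured by $L_1$. The real work is therefore to replace $\partial U$ by an honest essential surface --- either by isotoping/tubing to clear $L_1$ and obtain a genuine essential torus, or by extracting an essential annulus with boundary on $L_1$ --- and then to prove it is incompressible and not boundary-parallel, as in the verifications in Lemma~\ref{T(p, q)}. I expect incompressibility to be where the hypothesis $|s|>3$ enters: a compressing disc would have to cross the twisting region, and sufficiently many full twists preclude this. Once an essential torus (or annulus) is produced, Thurston's hyperbolization theorem stated in the introduction immediately gives that $T(p,q;p\pm1,s)$ is not hyperbolic.

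Finally, the case $r=p+1$, where $r>p$ and a wrap-around strand appears, I would reduce to the same picture: either through the symmetry $T(p,q;r,s)=T(q,p;r,s)$ of Proposition~\ref{Prop:(q,p)=(p,q)} together with the T-link reformulation recalled in the introduction, or by letting the single \emph{extra} encircled strand of $L_1$ play the role that $t_0$ plays when $r=p-1$. In either formulation the essentiality argument of the third paragraph is identical, so the two subcases are handled uniformly.
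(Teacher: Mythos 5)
Your proposal correctly isolates the decisive fact (for $r=p\pm1$, Lemma~\ref{LinkingNumber} forces one component $L_i$ to have $\mathrm{lk}(J,L_i)=\frac{p}{2}$, i.e.\ $J$ encircles \emph{every} strand of $L_i$) and aims at the right conclusion (an essential surface survives the filling, so the link is not hyperbolic). But the step you yourself call ``the real work'' --- turning the punctured candidate torus $\partial U$ into an honest essential surface --- is precisely the proof, and you leave it unexecuted; and the obstacle is self-inflicted by your choice of torus. Because $J$ encircles all of $L_i$, one can take $T$ to be an unknotted torus parallel to $F$ with $L_i$ alone embedded on it and with \emph{both} $L_j$ and $J$ on the other side: the disk $D$ bounded by $J$ meets the solid torus bounded by $T$ in a single meridian disk containing all the punctures by $L_i$, while the punctures by $L_j$ lie in the complementary annular region of $D$. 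No strand of $L_j$ crosses $T$, so no tubing or ``clearing'' is needed. The essential surface is then simply the annulus $A=T-N(L_i)$, which is essential in $M(p,q,r)$ since $J$ wraps once around the solid torus bounded by $T$. Under $(1/s)$-surgery along $J$ the torus $T$ is preserved setwise (the surgery is a twist of $T$ along the meridian circle $D\cap T$), $L_i$ becomes the torus knot $L_i'=T(\frac{p}{2},\frac{q}{2}+s\frac{p}{2})$ lying on $T$, and $A$ becomes the cabling annulus $T-N(L_i')$, which is essential in $S^3-N(L_i')$ whenever $L_i'$ is a nontrivial torus knot \cite{Incompressible}. The one degenerate case $\frac{q}{2}+s\frac{p}{2}=-1$ is handled by a linking-number argument: the other component must link $L_i'$ nontrivially (otherwise the link is split, hence already non-hyperbolic), so it wraps around $T$ and the annulus remains incompressible, boundary-incompressible, and not boundary-parallel. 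Note this treats $r=p-1$ and $r=p+1$ uniformly, since only $\mathrm{lk}(J,L_i)=\frac{p}{2}$ is used; no appeal to Proposition~\ref{Prop:(q,p)=(p,q)} or to T-links is needed.

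A second, symptomatic error: your expectation that incompressibility ``is where the hypothesis $|s|>3$ enters'' is wrong. The proposition holds for every $s\neq 0$, no bound on $|s|$ appears in its statement or in the paper's proof, and in fact $|s|>3$ is used in the paper only in the opposite direction --- via the Gordon--Wu distance bounds --- to prove hyperbolicity in the remaining cases. The incompressibility you are missing is not a ``many twists'' estimate but comes from the classification of essential annuli in torus knot complements, applied after the surgery.
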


\begin{proof}
If $r = p-1$ or $p+1$, then one link component $L_i = T(\frac{p}{2}, \frac{q}{2})$ of the two-component link $T(p, q)$ has linking number equal to $\frac{p}{2}$ with $J$ from Lemma~\ref{LinkingNumber}.
It means that we can consider an unknotted torus $T$ bounding $L_i$ in one side and the other two link components of $M(p, q, r)$ in the other side so that $L_i$ can be embedded in $T$. Thus, $A = T-N(L_i)$ is an annulus. This annulus is essential in $M(p, q, r)$ because it is not boundary parallel in $N(L_i)$, and as $J$ wraps around $A$ one time, $A$ is also incompressible and boundary incompressible in $M(p, q, r)$.

After $(\frac{1}{s})$-Dehn surgery along $J$,  the link component $L_i$ is transformed into the torus knot $L'_i = T(\frac{p}{2}, \frac{q}{2} + s\frac{p}{2})$, but the torus $T$ remains the same. Thus, $A$ is transformed into the annulus $T - N(L'_i)$, called $A'$.

If $\frac{q}{2} + s\frac{p}{2} \neq -1$, then $L'_i$ is a non-trivial torus knot. Thus, $A'$ is essential in $S^3 - N(L'_i)$ \cite{Incompressible}, and so essential in $T(p, q;r, s)$ as well. This implies that $T(p, q;r, s)$ is not hyperbolic.

Consider now that $\frac{q}{2} + s\frac{p}{2} = -1$. We can assume that the other link component $L'_j$ of $T(p, q;r, s)$ has non-zero linking number with $L'_i$ otherwise $T(p, q;r, s)$ would be reducible, and so not hyperbolic. Thus, $L'_j$ wraps at least one time around $T$, which makes $A'$ incompressible and boundary incompressible. Since $A'$ is never  boundary parallel, $A'$ is essential in $T(p, q;r, s)$, implying that $T(p, q;r, s)$ is not hyperbolic in this case as well.
\end{proof}

\begin{figure}
\includegraphics[scale=0.65]{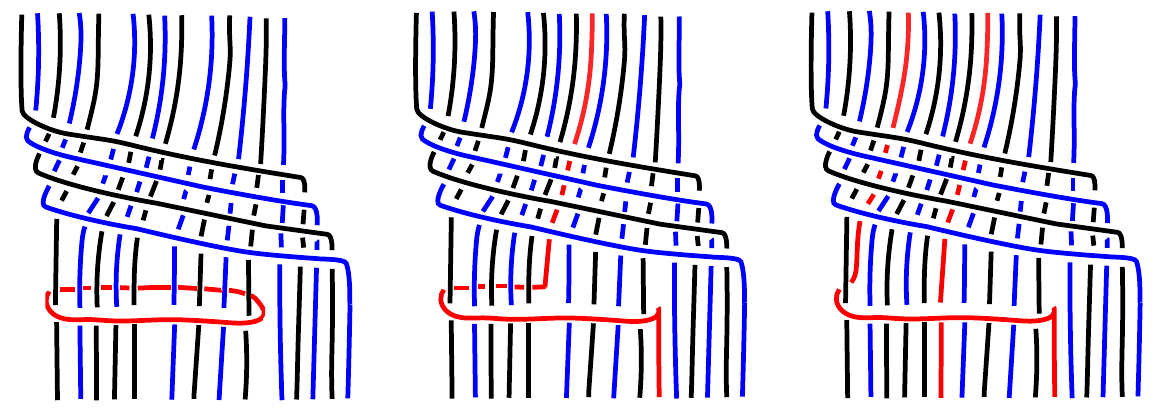} 
  \caption{If $r_i = k\cdot q/2$, we may isotope $J$ to embed in $S^3$ as the torus knot $T(k,1)$, disjoint from $L_1$ and $L_2$.}
  \label{Fig:MultQ1}
\end{figure}

\begin{proposition}\label{propositionq}
Consider $gcd(p, q) = 2$ with $q>2$ and $r$ odd. If $r_i$ is a multiple of $\frac{q}{2}$, then the twisted torus link $T(p, q;r, s)$ is not hyperbolic.
\end{proposition}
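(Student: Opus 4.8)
The plan is to reduce to the geometric picture recorded in Figure~\ref{Fig:MultQ1} and then run an essential-surface argument parallel to that of Proposition~\ref{propositionp}. By Lemma~\ref{LinkingNumber} I may assume $r_1 = k\cdot\frac{q}{2}$ for some integer $k\geq 1$, where $L_1 = T(\frac{p}{2},\frac{q}{2})$. The starting observation is that $L_1$ meets a meridian disk of the solid torus $V$ bounded by $F$ exactly $\frac{q}{2}$ times; hence the $r_1 = k\cdot\frac{q}{2}$ strands of $L_1$ that $J$ encircles amount to exactly $k$ complete longitudinal passes of $L_1$ around $F$. First I would use this to isotope $J$, within $S^3\setminus N(L_1\cup L_2)$, so that it lies on a torus $T'$ parallel to $F$ as the $(k,1)$-torus knot and is disjoint from $L_1\cup L_2$, exactly as in Figure~\ref{Fig:MultQ1}. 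Concretely, I would drag $J$ along the $k$ longitudinal passes it surrounds, trading its $r_1$ meridional intersections for a single $(k,1)$-curve on $T'$.

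With $J$ repositioned, I would analyze the $(1/s)$-Dehn surgery along $J$. Since $J$ is now the unknotted $(k,1)$-curve on the torus $T'$, and $T'$ is parallel to $F$, the surgery is realized by an ambient twist of $S^3$ supported near $T'$; this twist carries the unknotted torus $F$, and with it the torus link $T(p,q)$, onto an unknotted torus. Thus $T(p,q;r,s)$ lies on an unknotted torus and is a torus link, which is not hyperbolic. Should the resulting configuration instead present itself as a cable rather than a genuine torus link, I would fall back on exhibiting the image of $T'$, pushed off $J$, as an incompressible and non–boundary-parallel torus: generically the relevant component becomes a nontrivial torus knot $T(\frac{p}{2},\frac{q}{2}+s\frac{p}{2})$, around which $T'$ displays a satellite structure, so by Thurston's hyperbolization theorem~\cite{Thurston} the link is again not hyperbolic.

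The main obstacle is twofold. First, I must justify the isotopy of $J$ to the $(k,1)$-curve rigorously rather than pictorially, tracking how the encircled strands close up into $k$ longitudinal passes without creating extra intersections with $L_2$, whose linking number $r_2 = r_1\mp 1$ with $J$ must be respected throughout; this off-by-one bookkeeping is the delicate point. Second, as in Proposition~\ref{propositionp}, I must confirm that the surgery is a torus-preserving twist and, in the satellite alternative, verify essentiality of the resulting torus and treat separately the degenerate subcase in which a component collapses to a trivial knot. In that subcase I would invoke the nonzero linking number between the two components of $T(p,q;r,s)$, equivalently the nonsplittability furnished by Lemma~\ref{linkingnumber}, to force the surface to remain incompressible, and conclude non-hyperbolicity there as well.
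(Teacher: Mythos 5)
Your opening move---dragging $J$ around the annulus so that it closes up into a $(k,1)$-torus knot on a torus parallel to $F$, as in Figure~\ref{Fig:MultQ1}---is exactly the paper's first step. But your main line of argument then fails at the key point: you claim that the $(1/s)$-surgery along $J$ is ``realized by an ambient twist of $S^3$ supported near $T'$'' which ``carries the unknotted torus $F$ \dots onto an unknotted torus,'' so that $T(p,q;r,s)$ is a torus link. This is false in general. The surgery along $J$ is a twist along a disk bounded by $J$, and that disk cannot be made disjoint from $F$; the actual effect on $F$ is the content of Theorem 2.1 of \cite{unknotted} (or Theorem 4.3 of \cite{dePaivaPurcell:SatellitesLorenz}): $F$ is carried to a torus $F'$ whose core is the torus knot $T(k,ks+1)$, which is \emph{knotted} whenever $k\geq 2$ and $(k,s)\neq(2,-1)$. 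So generically the resulting link is a satellite (a cable over $T(k,ks+1)$), not a torus link---this is precisely why Lee's twisted torus knots with $r$ a multiple of $q$ are cable knots \cite{cable}. Your ``fallback'' case is therefore not the exceptional case but the main one, and it is left essentially unproved: you quote the wrong knot type for the twisted component ($T(\frac{p}{2},\frac{q}{2}+s\frac{p}{2})$ is the formula from Proposition~\ref{propositionp}; here the component becomes the pattern $T(\frac{q}{2},\frac{p}{2}+k^2\frac{q}{2}s)$ inside $F'$), and you never verify that $F'$ is incompressible and not boundary parallel in the complement of the surgered link, which is the substance of the paper's proof.

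Moreover, the genuinely degenerate case is not ``a component collapses to a trivial knot'' but rather that the companion $T(k,ks+1)$ is trivial (i.e.\ $k=1$, or $(k,s)=(2,-1)$), in which case $F'$ is an unknotted torus and may fail to be essential. The paper handles this with a separate argument: either the second component $L'_j$ is not parallel to a meridian of $F'$, so $F'$ is still essential, or it is parallel to a meridian, in which case the annulus $F'-N(L'_i)$ is shown to be essential instead. Your proposed appeal to nonsplittability via Lemma~\ref{linkingnumber} only rules out splitting spheres; it does not by itself give incompressibility or non-boundary-parallelism of any surface in this unknotted-torus case. In summary: correct geometric starting point, but the central claim is wrong as stated, and the correct satellite route is sketched with incorrect formulas and without the essentiality arguments that constitute the actual proof.
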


\begin{proof}Consider $T$ the torus which the torus link $T(p, q)$ lies.

Follow the procedure of Lee in \cite{cable} (also \cite{dePaivaPurcell:SatellitesLorenz}): drag the inner point of the intersection of $T$ and $J$ around the annulus $T-N(T(p,q))$. See Figure~\ref{Fig:MultQ1}. Because $r_i=kq/2$, after $k$ times, the curve connects to form a $(k,1)$-torus knot.

Let $F$ be an  unknotted torus containing $L_i$ in one side and the other two link components of $M(p, q, r)$ in the other side.  As $q>2$, $L_i$ wraps at least two times around $F$.
Thus, $F$ is essential in $M(p, q, r)$. Futhermore, $F$ is the essential torus obtained in \cite[Proposition 3.2]{dePaivaPurcell:SatellitesLorenz} for $S^3-N(L_i\cup J)$. Thus, by \cite[Theorem 4.3]{dePaivaPurcell:SatellitesLorenz} or   \cite[Theorem 2.1]{unknotted}, $(\frac{1}{s})$-surgery along $J$ transforms $F$ into a new torus $F'$ with its core being the torus knot $T(k, ks+1)$ and the component $L_i$ into the torus knot $T(\frac{q}{2}, \frac{p}{2} + k^2\frac{q}{2}s)$, called $L'_i$. As $\frac{q}{2}>1$, the wrapping number of $L'_i$ inside $F'$ is greater than one. So, $F'$ is not boundary parallel to $\partial N(L'_i)$.
Thus, if $k\neq 1$ or $(k, s)\neq (2, -1)$, then $F'$ is knotted and so essential in $S^3 - N(T(p, q;r, s))$. Consider now that $k = 1$ or $(k, s)= (2, -1)$. 
In this case, $F'$ is unknotted. If the other link component $L'_j$ of $T(p, q;r, s)$ is not isotopic to a meridian of $F'$, then $F'$ is essential in $S^3 - N(T(p, q;r, s))$. Consider now that $L'_j$ is isotopic to a meridian of $F'$. Then, the annulus $F'- N(L'_i)$ is essential in $S^3 - N(T(p, q;r, s))$. 

Therefore, in all cases, $T(p, q;r, s)$ is never hyperbolic.
\end{proof}

We summarize this section into the following theorem.

\begin{theorem}\label{main2}
The twisted torus link $T(p, q; r, s)$ is not hyperbolic for the following parameters:
\begin{enumerate}
\item $gcd(p, q) > 2$;

\item $gcd(p, q) = 2$ and $r$ even;

\item $gcd(p, q) = 2$ and $r = p-1$ or $p+1$;

\item $\gcd(p, q) = 2$ and $r$ equal to $kq\pm 1$ with $k\geq 1$ and $q>2$.
\end{enumerate}
\end{theorem}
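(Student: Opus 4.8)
The plan is to recognise that this theorem is simply a consolidation of the four propositions established above, so the proof reduces to matching each listed parameter family to the proposition that handles it and checking that the hypotheses line up. No new geometric input is required beyond the essential surfaces already constructed.

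First I would dispatch the three cases stated in the same language as the propositions. Case (1), $\gcd(p,q)>2$, is exactly Proposition~\ref{greater2}; case (2), $\gcd(p,q)=2$ with $r$ even, is Proposition~\ref{equalto2}; and case (3), $\gcd(p,q)=2$ with $r=p-1$ or $p+1$, is Proposition~\ref{propositionp}. For each of these the conclusion is immediate.

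The only case requiring a translation is (4), since Proposition~\ref{propositionq} is phrased in terms of a linking number $r_i$ being a multiple of $q/2$, whereas the theorem is phrased in terms of $r=kq\pm1$. I would bridge the two using Lemma~\ref{LinkingNumber}. Writing $r=r_1+r_2$ with $r_1=r_2\pm1$, one gets $\{r_1,r_2\}=\{(r-1)/2,(r+1)/2\}$, so an integer $r_i$ equals $k\cdot(q/2)$ precisely when $r=2r_i\pm1=kq\pm1$; conversely $r=kq\pm1$ forces one of $r_1,r_2$ to equal $k(q/2)$. I would also note that since $\gcd(p,q)=2$ makes $q$ even, every $r$ of the form $kq\pm1$ is automatically odd, so the standing hypotheses of Lemma~\ref{LinkingNumber} and of Proposition~\ref{propositionq} (namely $r$ odd and $q>2$) are satisfied. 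Applying Proposition~\ref{propositionq} to whichever component $L_i$ carries the multiple-of-$q/2$ linking number then yields case (4).

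The main, and essentially only, obstacle I anticipate is the bookkeeping in case (4): carefully tracking the $\pm$ signs and parities so that the equivalence ``$r_i$ a multiple of $q/2$'' $\iff$ ``$r=kq\pm1$'' is exact, and confirming that the component to which Proposition~\ref{propositionq} is applied is precisely the one identified by Lemma~\ref{LinkingNumber}.
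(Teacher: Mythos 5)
Your proposal is correct and matches the paper's proof, which likewise deduces the theorem by citing Propositions~\ref{greater2}, \ref{equalto2}, \ref{propositionp}, and \ref{propositionq} for the four cases respectively. Your extra bookkeeping for case (4) --- using Lemma~\ref{LinkingNumber} to show $r_i = k(q/2)$ for some $i$ if and only if $r = kq \pm 1$ --- is exactly the translation the paper itself performs (in the discussion at the start of Section~4 rather than inside the proof), so the approaches coincide.
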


\begin{proof}
It follows from Proposition~\ref{greater2}, \ref{equalto2}, \ref{propositionp}, and \ref{propositionq}.
\end{proof}

\section{Hyperbolic Twisted Torus Links}

In this section we prove that the twisted torus links $T(p, q; r, s)$ are hyperbolic for the remaining parameters $(p, q; r, s)$ with $\vert s\vert >3$.

From now on we always consider that $gcd(p, q) = 2$ and $r$ is odd, where $r$ is the linking number between $T(p, q)$ and $J$. The torus link $T(p, q)$ has two components, $L_1$ and $L_2$, both equal to the torus knot $T(\frac{p}{2}, \frac{q}{2})$.
Consider $r_i$ the linking number between $J$ and $L_i$.  If $r_1$ is odd, then $r_2$ is even, and vice versa by Lemma~\ref{LinkingNumber}. 
Furthermore, we assume that $r_i$ is not a multiple of $\frac{p}{2}$, and if $q>2$ then $r_i$ is also not a multiple of $\frac{q}{2}$.
If $r_i$ is a multiple of $\frac{p}{2}$, then $r = p\pm1$, which we have already proved to be non-hyperbolic in Proposition~\ref{propositionp}.
If $r_i$ is a multiple of $\frac{q}{2}$ with $q>2$, then $r = kq\pm 1$ for $k>0$, which is non-hyperbolic by Proposition~\ref{propositionq}.

$T(4,2;3,s)$ and $T(4,2;5,s)$ are the only twisted torus links for $p\leq 4$. We see that they don't satisfy the above criteria.
In fact, we proved that they are non-hyperbolic in Proposition~\ref{propositionp}. Thus, we also consider that $p>4$.
 
\begin{proposition}\label{irreducible}
$M(p, q, r)$ is irreducible and boundary irreducible
\end{proposition}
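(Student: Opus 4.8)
The plan is to derive both conclusions from the linking-number data already in hand, namely Lemma~\ref{linkingnumber} and Lemma~\ref{LinkingNumber}, by adapting the template of the proof of Lemma~\ref{irreducible,boundaryirreducible} to the three-component link $T(p,q)\cup J$ with components $L_1$, $L_2$, $J$. The numerical input I will use throughout is that every component is linked with another: by Lemma~\ref{linkingnumber} we have $\mathrm{lk}(L_1,L_2)=\tfrac{p}{2}\cdot\tfrac{q}{2}\neq 0$, and by Lemma~\ref{LinkingNumber} both $r_1=\mathrm{lk}(L_1,J)$ and $r_2=\mathrm{lk}(L_2,J)$ are nonzero.

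For irreducibility I would first show that $T(p,q)\cup J$ is non-split. Any splitting sphere partitions the three components into two nonempty sublinks, and two components lying on opposite sides of such a sphere must have linking number zero. Examining the three possible partitions $\{L_1\}\mid\{L_2,J\}$, $\{L_2\}\mid\{L_1,J\}$, and $\{J\}\mid\{L_1,L_2\}$, each is obstructed by a nonvanishing linking number: the first two by $\mathrm{lk}(L_1,L_2)\neq 0$ and the third by $r_1\neq 0$ (or $r_2\neq 0$). Hence the link is non-split, and since $M(p,q,r)$ is the complement of a non-split link in $S^3$ it is irreducible.

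For boundary irreducibility I would argue by contradiction, following Lemma~\ref{irreducible,boundaryirreducible}. Suppose a compression disk $D$ exists, and let $K\in\{L_1,L_2,J\}$ be the component with $\bdy D\subset\bdy N(K)$. The case in which $\bdy D$ is inessential on $\bdy N(K)$ is excluded by irreducibility exactly as in Lemma~\ref{irreducible,boundaryirreducible}. The case in which $\bdy D$ is a meridian is ruled out by parity: a meridian bounds a disk in $N(K)$ meeting $K$ once, so capping $D$ with it produces a $2$-sphere meeting $K$ transversely in a single point, which is impossible for a closed curve. The remaining possibility is that $\bdy D$ is a longitude, so $K$ is unknotted and $D$ is a Seifert disk for $K$ disjoint from the other two components; this forces $\mathrm{lk}(K,\cdot)=0$ against both of them, contradicting $\mathrm{lk}(L_1,L_2)\neq 0$ when $K\in\{L_1,L_2\}$ and $r_1,r_2\neq 0$ when $K=J$.

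The delicate point is the component $J$. Unlike $L_1$ and $L_2$, the curve $J$ is a priori an unknot, so a longitudinal compression cannot be excluded by knottedness; the argument must instead rest entirely on $r_1,r_2\neq 0$ from Lemma~\ref{LinkingNumber}, since a Seifert disk for $J$ produced by such a compression would be disjoint from $L_1\cup L_2$ and thus force $r_i=0$. The meridian-slope case, although quickly dispatched by parity, is the other step that must be treated explicitly rather than absorbed into the irreducibility argument.
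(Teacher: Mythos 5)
Your proposal is correct and takes essentially the same approach as the paper: irreducibility follows from non-splitness via the nonvanishing pairwise linking numbers of $L_1$, $L_2$, $J$, and boundary irreducibility follows because a compressing disk would have to be a Seifert disk for an unknotted component, disjoint from the other two components, contradicting those same linking numbers. Your explicit dismissal of the meridian slope by parity is a minor refinement of a case the paper's proof leaves implicit.
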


\begin{proof} 
Since the linking number of any two components of the link $T(p, q)\cup J$ is non-zero, the manifold $M(p, q, r)$ is irreducible. 

If $M(p, q, r)$ were boundary reducible, there would be a boundary reducible disk $D$ for $M(p, q, r)$. Consider $C$ the component of $\partial M(p, q, r)$ in which $\partial D$ lies. Then, $\partial D$ does not bound a disk on $C$ . Thus, $C$ would be trivial and $D$ would be its Seifert surface. Since $D$ is not punctured by the other components of $\partial M(p, q, r)$, $C$ would have linking numbers equal to zero with the other components of $\partial M(p, q, r)$, a contradiction.
\end{proof}

We will prove that $M(p, q, r)$ doesn't have any essential annuli in the next two subsections. We will break this proof into two cases. But, we now consider that the following proposition is true.

\begin{proposition}\label{annuli133}
Consider $p, q, r$ integers greater than one such that $p+q \geq r >1$ and $p\neq r$.
Furthermore, assume that $gcd(p, q) = 2$, $r$ is odd and different from $p\pm 1$, and if $q>2$ 
then $r$ is not of the form $kq\pm 1$ with $k\geq 1$. Then, $M(p, q, r)$ is anannular.
\end{proposition}  
\begin{proof}   
It follows from Proposition~\ref{noessentialannuli} and \ref{anannular2}. 
\end{proof}  

\begin{lemma}\label{intersects}
The circle $J$ intersects each $A^{i}_{T(p, q)}$ at least one time.
\end{lemma}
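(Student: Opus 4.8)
The plan is to reduce the statement to a parity count on the arc $\alpha = D \cap F$. First I would observe that, since $J = \bdy D$ and $D$ meets $F$ precisely in the arc $\alpha$, the intersection $J \cap F$ consists of exactly the two endpoints of $\alpha$; call them $x_1$ and $x_2$. Neither endpoint lies on $T(p, q)$, so each $x_j$ lies in $F - N(T(p, q)) = A^1_{T(p, q)} \cup A^2_{T(p, q)}$. Since $A^i_{T(p, q)} \subseteq F$, we have $J \cap A^i_{T(p, q)} \subseteq J \cap F = \{x_1, x_2\}$, so it suffices to show that $x_1$ and $x_2$ lie in different annuli.

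Next I would analyze how $\alpha$ sits on $F$. By construction $\alpha$ passes through $r$ points of $T(p, q)$ in its interior, so these $r$ points cut $\alpha$ into $r + 1$ subarcs, each lying in the complement $A^1_{T(p, q)} \cup A^2_{T(p, q)}$; the two outermost subarcs contain $x_1$ and $x_2$ respectively. The components $L_1, L_2$ of $T(p, q)$ are disjoint parallel copies of $T(\frac{p}{2}, \frac{q}{2})$ on $F$, and their union separates $F$ into precisely the two annuli $A^1_{T(p, q)}$ and $A^2_{T(p, q)}$, which lie on opposite sides of each strand. Hence each transverse crossing of a strand of $T(p, q)$ carries $\alpha$ from one annulus into the other, so the $r + 1$ subarcs lie alternately in $A^1_{T(p, q)}$ and $A^2_{T(p, q)}$.

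Finally I would invoke the parity of $r$. Since $x_1$ and $x_2$ lie in the first and last of these alternating subarcs, they are separated by exactly $r$ strand-crossings. As $r$ is odd, the first and last subarcs lie in different annuli, so $x_1 \in A^a_{T(p, q)}$ and $x_2 \in A^b_{T(p, q)}$ with $a \neq b$. Therefore $J$ meets each of $A^1_{T(p, q)}$ and $A^2_{T(p, q)}$ in at least one point, as claimed.

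The main technical point to pin down carefully is the alternation claim: that the two annuli genuinely lie on opposite sides of every strand of $T(p, q)$, so that each transverse crossing of a strand by $\alpha$ switches annuli. This holds because, by definition, $A^1_{T(p, q)}$ and $A^2_{T(p, q)}$ are the two components of $F - N(T(p, q))$, with the disjoint essential curves $L_1, L_2$ as their common boundary. After a small isotopy making $\alpha$ transverse to $T(p, q)$ and realizing $|\alpha \cap T(p, q)| = r$, the count of crossings is exactly $r$ and the parity argument goes through.
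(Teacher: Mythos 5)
Your proof is correct, but it takes a genuinely different route from the paper's. The paper disposes of this lemma in one line by contradiction: if $J$ could be isotoped off $A^{i}_{T(p, q)}$, then by Lemma~\ref{T(p, q)} that annulus would be an essential annulus in $M(p,q,r)$, contradicting the anannularity of $M(p,q,r)$ (Proposition~\ref{annuli133}). You instead argue directly from the construction: $J=\bdy D$ meets $F$ exactly at the two endpoints of the arc $\alpha=D\cap F$; since $\gcd(p,q)=2$, each annulus has one boundary circle on each of $L_1$, $L_2$, so the two sides of every strand lie in different annuli and the $r+1$ subarcs of $\alpha$ alternate between $A^{1}_{T(p, q)}$ and $A^{2}_{T(p, q)}$; oddness of $r$ then forces the two endpoints into different annuli. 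Your argument is more elementary and self-contained: it avoids the anannularity machinery (whose proof the paper defers to the two case subsections) and it makes visible exactly which standing hypotheses ($\gcd(p,q)=2$ and $r$ odd) drive the conclusion --- indeed, for $r$ even the conclusion fails, which is precisely what Proposition~\ref{equalto2} exploits. What the paper's route buys in exchange is a stronger, isotopy-invariant conclusion: $J$ cannot even be isotoped off either annulus, whereas you only control the given position of $J$. That extra strength is never needed downstream: the sole application of the lemma, in Proposition~\ref{atoroidal22}, uses the fixed curve $J$ meeting the fixed annuli to conclude that $J$ lies on the same side of the essential torus $T$ as $L_1$ and $L_2$ (there it is $T$ that gets isotoped, not $J$), and for this your literal intersection statement, together with connectedness of $J$ and of the annuli, suffices.
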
 

\begin{proof}If $J$ could be isotopic to miss $A^{i}_{T(p, q)}$, then the annulus $A^{i}_{T(p, q)}$ would be an essential annulus in $M(p, q, r)$ by Lemma~\ref{T(p, q)}, contradicting the last proposition.
\end{proof}

Denote by $\overline{A}^{i}_{T(p, q)}$ the surface $A^{i}_{T(p, q)}\cap M(p, q, r)$.
Since $J$ intersects each $A^{1}_{T(p, q)}$ and $A^{2}_{T(p, q)}$ at least once and the torus $F$ twice, it follows that $J$ intersects each $A^{1}_{T(p, q)}$ and $A^{2}_{T(p, q)}$ exactly once. Thus, $A^{i}_{T(p, q)}\cap M(p, q, r)$ is a once punctured annulus (or a twice punctured disk).
 
\begin{lemma} 
Each surface $\overline{A}^{i}_{T(p, q)}$ is essential in $M(p, q, r)$.
\end{lemma}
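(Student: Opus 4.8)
The plan is to prove that $\overline{A}^{i}_{T(p, q)}$ is incompressible, boundary incompressible, and not boundary parallel in $M(p,q,r)$; since it is properly embedded and two-sided, this is precisely what it means to be essential. First I record the structure I will use. As $\overline{A}^{i}_{T(p, q)}$ is a two-punctured disc, the circle $J$ must meet the annulus $A^{i}_{T(p, q)}$ in a single point (Lemma~\ref{intersects} guarantees at least one, and more than one would create more punctures). Consequently $\overline{A}^{i}_{T(p, q)}$ is a pair of pants whose three boundary curves are $c_1\subset \partial N(L_1)$, $c_2\subset \partial N(L_2)$ and $c_J\subset \partial N(J)$, lying on three distinct components of $\partial M(p,q,r)$; moreover $c_J$ bounds a meridian disc $\delta$ of $N(J)$ meeting $J$ exactly once, and $A^{i}_{T(p, q)}=\overline{A}^{i}_{T(p, q)}\cup\delta$. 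Throughout I write $X=S^3-N(T(p,q))$, so that $M(p,q,r)=X-N(J)$ and $A^{i}_{T(p, q)}$ is essential in $X$ by Lemma~\ref{T(p, q)}.

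For incompressibility, suppose $E$ is a compressing disc, so $\partial E$ is an essential simple closed curve in the pair of pants $\overline{A}^{i}_{T(p, q)}$, hence parallel to $c_1$, $c_2$, or $c_J$. If $\partial E$ is parallel to $c_J$, then $E$, together with the annulus it cobounds with $c_J$ and the meridian disc $\delta$, forms a $2$-sphere meeting $J$ exactly once, which is impossible by parity. If instead $\partial E$ is parallel to $c_1$ or $c_2$, then $\partial E$ is isotopic in $A^{i}_{T(p, q)}$ to a boundary curve of that annulus and so is essential in $A^{i}_{T(p, q)}$; since $E\subset M(p,q,r)$ has interior disjoint from $A^{i}_{T(p, q)}$, it is a compressing disc for $A^{i}_{T(p, q)}$ in $X$, contradicting Lemma~\ref{T(p, q)}.

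That $\overline{A}^{i}_{T(p, q)}$ is not boundary parallel is immediate: a boundary parallelism would isotope it into $\partial M(p,q,r)$, but its three boundary curves lie on three distinct boundary tori, and a connected pair of pants cannot be embedded in a single torus. For boundary incompressibility, let $E$ be a boundary compressing disc with $\partial E=a\cup b$, where $a$ is an essential arc of $\overline{A}^{i}_{T(p, q)}$ and $b\subset \partial M(p,q,r)$. Since $\partial M(p,q,r)$ is disconnected, $b$ lies in a single boundary torus, so the two endpoints of $a$ lie on the same curve $c_1$, $c_2$, or $c_J$. Suppose $b\subset\partial N(L_1)$, so both endpoints of $a$ lie on $c_1$. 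An arc of the annulus $A^{i}_{T(p, q)}$ with both endpoints on one boundary circle is boundary parallel there, so $a$ cobounds a disc $\Delta\subset A^{i}_{T(p, q)}$ with a subarc $a'\subset c_1$; as $a$ is essential in $\overline{A}^{i}_{T(p, q)}$, the disc $\Delta$ must contain the puncture $\delta$, hence meets $J$ once. Then $G=E\cup_a\Delta$ is a disc in $X$ with $\partial G\subset\partial N(L_1)$ meeting $J$ once. By boundary irreducibility of $X$ (Lemma~\ref{irreducible,boundaryirreducible}) the curve $\partial G$ cannot be essential on $\partial N(L_1)$, so it bounds a disc there; capping $G$ with this disc gives a $2$-sphere meeting $J$ once, again impossible by parity. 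The case $b\subset\partial N(L_2)$ is identical.

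The remaining case, $b\subset\partial N(J)$, is the crux. Here $a$ runs from the meridian $c_J$ to itself, and $E$ exhibits $a$ as isotopic rel endpoints, across $M(p,q,r)$, to the arc $b$ on $\partial N(J)$. Because $c_J$ is a meridian of $N(J)$, this boundary compression can be realised as an isotopy of $J$ dragging the single point $A^{i}_{T(p, q)}\cap J$ across $\overline{A}^{i}_{T(p, q)}$ and off the annulus, making $J$ disjoint from $A^{i}_{T(p, q)}$. This contradicts Lemma~\ref{intersects}. Hence no boundary compressing disc exists, and $\overline{A}^{i}_{T(p, q)}$ is essential. I expect this last step to be the main obstacle: converting the combinatorial boundary compression into an explicit isotopy of $J$ that removes the intersection point, where the fact that $c_J$ is a meridian of $N(J)$ is exactly what makes the isotopy available.
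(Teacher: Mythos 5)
Most of your proposal is fine: the incompressibility cases (parity of a sphere--curve intersection for $c_J$; promotion to a compressing disc for $A^{i}_{T(p,q)}$ in $S^3-N(T(p,q))$, contradicting Lemma~\ref{T(p, q)}, for $c_1,c_2$), the non-boundary-parallelism, and the boundary-compression cases with $b\subset\partial N(L_j)$ all hold up. But the case you yourself flag as the crux, $b\subset\partial N(J)$, has a genuine gap that cannot be repaired in the form you propose. You claim the boundary compression ``can be realised as an isotopy of $J$ dragging the single point $A^{i}_{T(p,q)}\cap J$ across $\overline{A}^{i}_{T(p,q)}$ and off the annulus, making $J$ disjoint from $A^{i}_{T(p,q)}$.'' No such isotopy can exist, for any reason whatsoever: $A^{i}_{T(p,q)}$ is properly embedded in $X=S^3-N(T(p,q))$ with $\partial A^{i}_{T(p,q)}\subset\partial N(T(p,q))$, so under any isotopy of $J$ in the interior of $X$ the mod $2$ count of $J\cap A^{i}_{T(p,q)}$ is invariant --- intersection points are created and cancelled only in pairs at tangencies, and a single point cannot exit through $\partial A^{i}_{T(p,q)}$ without $J$ meeting $N(T(p,q))$. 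Since that count equals $1$ here, ``making $J$ disjoint from $A^{i}_{T(p,q)}$'' is impossible. Hence the implication you need (boundary compression $\Rightarrow$ such an isotopy) is exactly as strong as the statement that no such boundary compression exists; asserting the isotopy amounts to assuming what is to be proved, and Lemma~\ref{intersects} provides no bridge.

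The case can be closed with an honest argument as follows. The arc $b$ has endpoints on the meridian $c_J$, which it cuts into two arcs $\alpha_1,\alpha_2$; the two simple closed curves $d_1=\alpha_1\cup b$ and $d_2=\alpha_2\cup b'$ (with $b'$ a parallel push-off of $b$) are disjoint on the torus $\partial N(J)$ and satisfy $[d_1]+[d_2]=[c_J]$ in $H_1(\partial N(J);\ZZ)$, a primitive class. Disjoint essential curves on a torus are parallel, so if both $d_i$ were essential the sum would be $0$ or twice a class; hence one of them, say $d_1=\alpha_1\cup b$, bounds a disc $D'$ in $\partial N(J)$, necessarily with interior disjoint from $c_J$. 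Then $E\cup D'$, pushed slightly off $\partial N(J)$, is a compressing disc for $\overline{A}^{i}_{T(p,q)}$ whose boundary $a\cup\alpha_1$ is parallel in the pair of pants to $c_1$ or $c_2$ --- precisely the case your incompressibility argument already rules out. (Equivalently: performing the boundary compression yields an annulus with boundary $c_j\cup d_1$; capping $d_1$ with $D'$ gives a properly embedded disc in $M(p,q,r)$ with boundary $c_j$, essential on $\partial N(L_j)$, contradicting Proposition~\ref{irreducible}.) For comparison, the paper sidesteps the boundary-compression analysis entirely: it proves incompressibility (via boundary irreducibility of $M(p,q,r)$) and then invokes the standard fact that a properly embedded incompressible surface with boundary on torus boundary components of an irreducible $3$-manifold is essential unless it is a boundary-parallel annulus; since a pair of pants is not an annulus, that finishes the proof in one line, and you could do the same.
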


\begin{proof}Consider $D$ a compression disk for $\overline{A}^{i}_{T(p, q)}$. So, $\partial D$ is essential in $\overline{A}^{i}_{T(p, q)}$. Thus, $\partial D$ is always isotopic to one boundary component of $\overline{A}^{i}_{T(p, q)}$. This implies that $D$ is a boundary reducible disk for $M(p, q, r)$, which is a contradiction with Proposition~\ref{irreducible}. Thus, $\overline{A}^{i}_{T(p, q)}$ is incompressible in $M(p, q, r)$. 
Therefore, since an incompressible surface inside an irreducible 3-manifold,
with boundary contained in the torus boundary components of the 3-manifold, is essential
unless the surface is a boundary parallel annulus,  $\overline{A}^{i}_{T(p, q)}$ is essential in $M(p, q, r)$.
\end{proof}

\begin{proposition}\label{atoroidal22}
Consider $p, q, r$ integers greater than one such that $p+q \geq r >1$ and $p\neq r$.
Furthermore, assume that $gcd(p, q) = 2$, $r$ is odd and different from $p\pm 1$, and if $q>2$ 
then $r$ is not of the form $kq\pm 1$ with $k\geq 1$. Then, $M(p, q, r)$ is atoroidal.
\end{proposition}
\begin{proof}Consider $T$ an essential torus in $M(p, q, r)$.

If $T\cap (\overline{A}^{1}_{T(p, q)}\cup \overline{A}^{2}_{T(p, q)}) = \varnothing$, then $T$ is trivial containing $L_1$ and $L_2$ on the same side.
By Lemma~\ref{intersects}, $J$ must be on the same side as $L_1$ and $L_2$, which makes $T$ compressible, a contradiction. Thus, $T$ intersect at least one $\overline{A}^{i}_{T(p, q)}$. We also consider that $T\cap \overline{A}^{i}_{T(p, q)}$ is transverse. Furthermore, $T\cap \overline{A}^{i}_{T(p, q)}$ consists of essential loops in $\overline{A}^{i}_{T(p, q)}$ as $T$ is incompressible. 

Consider $\gamma$ an innermost loop of $T \cap \overline{A}^{i}_{T(p, q)}$ in $\overline{A}^{i}_{T(p, q)}$. First assume that $\gamma$ is isotopic to one boundary component $C$ of $\overline{A}^{i}_{T(p, q)}$. Then, $\gamma$ and $C$ bounds an annulus $A$ in $\overline{A}^{i}_{T(p, q)}$ which intersects $T$ just in $\gamma$. By doing a surgery in $T$ along $A$, we obtain a new properly embedded annulus $A'$ which has boundaries in $C$. $A'$ is incompressible and not boundary parallel in $M(p, q, r)$ since $\overline{A}^{i}_{T(p, q)}$ is incompressible and $T$ is not boundary parallel. So, $A'$ is essential in $M(p, q, r)$, which is a contradiction with Proposition~\ref{annuli133}. Finally, if $\gamma$ cuts a pair of pants containing two boundary components  of $\overline{A}^{i}_{T(p, q)}$, then the outermost loop of $T\cap \overline{A}^{i}_{T(p, q)}$ is isotopic to the third boundary component of $\overline{A}^{i}_{T(p, q)}$, which is not possible by the previous argument.
\end{proof} 

\begin{theorem}
Consider $p, q, r$ integers greater than one such that $p+q \geq r >1$ and $p\neq r$.
Furthermore, assume that $gcd(p, q) = 2$, $r$ is odd and different from $p\pm 1$, and if $q>2$ 
then $r$ is not of the form $kq\pm 1$ with $k\geq 1$. Then, $M(p, q, r)$ is hyperbolic.
\end{theorem}

\begin{proof}
It follows from Proposition~\ref{irreducible}, \ref{annuli133} and \ref{atoroidal22}. 
\end{proof}

\begin{theorem}\label{main1}
Consider $p, q, r$ integers greater than one such that $p+q \geq r >1$ and $p\neq r$.
Furthermore, assume that $gcd(p, q) = 2$, $r$ is odd and different from $p\pm 1$, and if $q>2$ 
then $r$ is not of the form $kq\pm 1$ with $k\geq 1$. Then, for $\vert s\vert\geq 4$, the twisted torus link $$T(p, q; r, s)$$ is hyperbolic.
\end{theorem}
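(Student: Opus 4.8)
The plan is to view the complement $S^3 - N(T(p,q;r,s))$ as a Dehn filling of the hyperbolic manifold $M(p,q,r)$ along the cusp $\bdy N(J)$. By definition $T(p,q;r,s)$ is obtained by $(1/s)$-surgery on $J$, so its complement is the filling $M(p,q,r)(\gamma_s)$, where $\gamma_s = \mu + s\lambda$ and $\mu,\lambda$ are the meridian and longitude of $J$; the two remaining cusps are $L_1$ and $L_2$. The hypotheses on $p,q,r$ are exactly those under which the preceding theorem gives that $M(p,q,r)$ is hyperbolic. Thurston's hyperbolization theorem then reduces the claim to showing that, for $|s|\geq 4$, the filled manifold is irreducible, boundary irreducible, atoroidal and anannular; equivalently, that $\gamma_s$ is not one of the finitely many exceptional slopes of the cusp $\bdy N(J)$.

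First I would anchor the exceptional slopes to a single known one. The meridian $\mu=\gamma_0$ caps $J$ off trivially and recovers $S^3 - N(T(p,q))$, which by Lemma~\ref{T(p, q)} contains the essential annuli $A^{i}_{T(p,q)}$; hence $\gamma_0$ is an annular slope. Moreover the associated surface $\overline{A}^{i}_{T(p,q)}$ meets $\bdy N(J)$ in exactly two points, of slope $\mu$, and this twice-punctured essential surface is the reference surface I would feed into the Gordon--Wu intersection-graph machinery.

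Next I would bound the distance from $\gamma_0$ to each of the four types of exceptional slope. Because the twist slopes satisfy $\Delta(\gamma_s,\gamma_{s'})=|s-s'|$, and in particular $\Delta(\gamma_s,\gamma_0)=|s|$, it is enough to show that every exceptional slope $\gamma$ obeys $\Delta(\gamma,\gamma_0)\leq 3$. For a reducible or boundary-reducible filling this follows from the Gordon--Wu (together with Gordon--Luecke) bounds on the distance between an annular slope and a reducing slope. For a toroidal or annular filling one confronts the essential torus, respectively annulus, of $M(p,q,r)(\gamma)$ with the reference annulus above and applies the Gordon--Wu distance estimate; the point is that a reference surface meeting the filling cusp in only two points forces a sharp bound $\Delta(\gamma,\gamma_0)\leq 3$ rather than the generic, larger constant. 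Combining the four cases, no slope $\gamma_s$ with $|s|\geq 4$ is exceptional, so $M(p,q,r)(\gamma_s)$ is hyperbolic, as claimed.

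The hard part will be the toroidal and annular cases. There one must check that the Gordon--Wu hypotheses genuinely apply to $M(p,q,r)$ --- hyperbolicity is in hand, but one should confirm that the reference annulus is essential and that its two punctures carry the meridian slope --- and then extract the constant $3$ from the two-puncture count, which is precisely what produces the threshold $|s|>3$. I would also need to rule out a reducible or boundary-reducible filling arising at distance $\geq 4$, which again rests on this small puncture number.
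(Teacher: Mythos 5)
Your proposal follows the same skeleton as the paper's proof: realize the complement of $T(p,q;r,s)$ as the $(1/s)$-filling of the cusp $\bdy N(J)$ of the hyperbolic manifold $M(p,q,r)$, note that the meridian slope $\gamma_0$ is annular because filling it returns $S^3-N(T(p,q))$, which contains the essential annuli of Lemma~\ref{T(p, q)}, and then invoke Gordon--Wu distance bounds to exclude toroidal and annular fillings at distance $|s|\geq 4$, handling reducibility and boundary-reducibility by separate citations (the paper uses Wu's lamination theorem and Theorem~1 of Gordon--Wu for those two properties).

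However, the mechanism you propose for obtaining the key constant is wrong, and this is a genuine gap. You claim that the bound $\Delta(\gamma,\gamma_0)\leq 3$ is ``extracted from the two-puncture count'' of the reference surface $\overline{A}^{i}_{T(p,q)}$, i.e.\ that supplying a low-puncture reference surface sharpens a generic, larger Gordon--Wu constant. The Gordon--Wu theorems do not work this way: they are statements purely about slopes, their proofs choose and minimize the relevant punctured surfaces internally, and there is no version of them that accepts a user-chosen reference surface. What Theorem~1.1 of \cite{GordonToroidal} actually says is that if $M(\alpha)$ is annular and $M(\beta)$ is toroidal, then $\Delta(\alpha,\beta)\leq 3$ \emph{unless} $M$ is one of three explicit manifolds $M_1$, $M_2$, $M_3$, on which larger distances (up to $5$) are realized; Theorem~1.1 of \cite{GordonAnnular} has the same structure for two annular fillings. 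So the step your argument is missing is exactly the one the paper performs: ruling out that $M(p,q,r)$ is homeomorphic to $M_1$, $M_2$, or $M_3$ (one concrete reason: each $M_i$ is the exterior of a two-component link and so has two boundary tori, whereas $M(p,q,r)$ has three). Without that exclusion, Gordon--Wu only yields $|s|\leq 5$, and your proof fails precisely at $|s|=4,5$, the first cases Theorem~\ref{main1} must cover. Two smaller points: with $\gcd(p,q)=2$ and $r$ odd, the circle $J$ crosses the torus $F$ in two points, one lying in each annulus, so $\overline{A}^{i}_{T(p,q)}$ meets $\bdy N(J)$ exactly once (it is a once-punctured annulus, the paper's ``two-punctured disc''), not twice; and if you genuinely intended to rerun the intersection-graph combinatorics with this one-vertex surface to derive the constant $3$ directly, that would be a substantial new argument requiring its own proof, not a routine application of existing results.
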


\begin{proof}
Consider that the twisted torus link $T(p, q; r, s)$ has an essential torus for $\vert s\vert \geq 4$. For $s=0$, the twisted torus link $T(p, q; r, s)$ has an essential annulus by Lemma~\ref{T(p, q)}. The manifold $M(p, q, r)$ is hyperbolic from the last theorem. Then, by [Theorem 1.1, Gordon and Wu \cite{GordonToroidal}], $\vert s\vert < 3$ (since $\Delta(1/0, 1/s) = \vert s \vert$ and $M(p, q, r)$ is not homeomorphic to $M_1$, $M_2$, or $M_3$ in \cite[Theorem 1.1]{GordonToroidal} as these manifolds are hyperbolic), a contradiction.

Consider now that the twisted torus link $T(p, q; r, s)$ has an essential annulus for $\vert s\vert \geq 4$. Then, $\vert s\vert < 3$ by [Theorem 1.1, Gordon and Wu \cite{GordonAnnular}] (because the exteriors of the links in the items (1), (2), and (3) of \cite[Theorem 1.1]{GordonAnnular} are $M_1$, $M_2$, and $M_3$, respectively, in \cite[Theorem 1.1]{GordonToroidal}). 

Lastly, $T(p, q; r, s)$ is irreducible by [Theorem 5.1, Wu \cite{Wulaminations}] and boundary irreducible by [Theorem 1, Gordon and Wu \cite{GordonWu}].
\end{proof}

Therefore, Theorem~\ref{maintheorem} follows from Theorem ~\ref{main2} and \ref{main1}.

\subsection{First Case: $q > 2$}  
In this subsection we prove Proposition~\ref{annuli133} for when $q > 2$. Therefore, we also assume $q > 2$ in this subsection.

The two link components of $T(p,q)$, $L_1$ and $L_2$, are both the nontrivial torus knot $T(p/2,q/2)$.
Denote $S^3-(N(L_i\cup J))$ by $M_i$ with $i= 1, 2$. Lee proved that $M_i$  is hyperbolic if $r_i>1$ in \cite[Proposition 5.7]{unknotted}.

\begin{lemma}\label{annulitwocomponent}
$M(p, q, r)$ contains no essential annuli with boundary components in two different link components.
\end{lemma}

\begin{proof}
Consider $A$ an annulus which has boundaries in $L_1$ and $L_2$. Assume that $A$ intersects $A^{i}_{T(p, q)}$  with $i = 1$ or $2$. Then, since $A$ and $A^{i}_{T(p, q)}$ are incompressible, $A\cap A^{i}_{T(p, q)}$ are formed by essential loops in both $A$ and $A^{i}_{T(p, q)}$. Thus, each loop in $A\cap A^{i}_{T(p, q)}$ is isotopic to the boundary components of $A$ and $A^{i}_{T(p, q)}$. Therefore, the boundaries of $A$ are isotopic to the boundaries of  $A^{i}_{T(p, q)}$. Since the boundaries of  $A^{i}_{T(p, q)}$ are isotopic to $L_1$ and $L_2$, the boundaries of $A$ are also isotopic to $L_1$ and $L_2$. Thus,  $L_1$ and $L_2$ are isotopic to each other in $M(p, q, r)$ throughout $A$, which implies $r_1 = r_2$ contradicting Lemma~\ref{LinkingNumber}. Now if $A$ doesn't intersect $A^{i}_{T(p, q)}$ with $i = 1, 2$, then if we push the boundaries of $A$ to the interiors of $N(L_1)$ and $N(L_2)$, they must have wrapping numbers equal to one in these solid tori. This implies that the boundaries of $A$ are isotopic to $L_1$ and $L_2$, which generates a contradiction as before. 

By \cite[Lemma 5.2]{unknotted}, there is no annuli with boundaries in $\partial_{L_i}M$ and $\partial_{J}M$ if $r_i>1$. Then, consider $A$ an annulus in $M(p, q, r)$ with boundaries in  $\partial_{L_i}M$ and $\partial_{J}M$
with $r_i=1$. So, $r_j=2$ and $r=3$. 
The boundary $\partial_1 A$ of $A$ in $\partial_{J}M$ should be a torus knot $T(a, b)$. If 
$T(a, b)$ is trivial, then $\partial_2 A$ is a meridian of $\partial_{L_i}M$ as $L_i$ is a non-trivial torus knot. As $lk(\partial_2 A, J) = 0$, then $\partial_1 A$ is the longitude of $\partial_{J}M$ because $\partial_1 A$ and $\partial_2 A$ are isotopic through $A$. So, $\partial_1 A$ and $\partial_{J}M$ are isotopic. 
This implies that $lk(L_j, J) = 0$, a contradiction. 
Hence $T(a, b)$ is non-trivial. Because torus knots don't have any essential torus \cite{Incompressible}, $\partial_2 A$ is isotopic to $L_i$. Then, $L_i$ can be embedded in $\partial_{J}M$. Since $lk(L_i, J) = 1$, then $b= \pm 1$  \cite{Rolfsen}. However, this implies that $L_i$ is trivial, a contradiction.
\end{proof}

Observe that up to isotopy, there is exactly one essential annulus in $S^3-N(L_i)$ with both of its boundary components on $\bdy N(L_i)$, for $i=1,2$: this is the annulus $F-N(L_i)$, where $F$ is the unknotted torus that the torus knot $L_i$ lies. See~\cite{Incompressible}.

In $S^3-N(L_1\cup L_2)$, there may be more essential annuli with both boundary components on $\bdy N(L_i)$ and essential in $S^3-N(L_i)$. For example, isotope the link in a regular neighbourhood $T\times[0,1]$ of $T$ so that $L_1$ lies on $T\times\{0\}$ and $L_2$ on $T\times\{1\}$. The annulus $P_1' = T\times\{0\}-L_1$ is one such annulus. Alternatively, we might isotope $L_1$ to $T\times\{1\}$ and $L_2$ to $T\times\{0\}$ and let $P_1''=T\times\{1\}-L_1$. These are not obviously isotopic, because $L_2$ lies between them. In any case, in the lemma below, we may take $P_i'$ to be any annulus in $S^3-N(L_1\cup L_2)$ with both boundary components on $\bdy N(L_i)$ that is essential in $S^3-N(L_i)$. Observe that in the above example, $J$ meets $P_1'$ and $P_1''$ exactly twice. The point of the next lemma is to show that $J$ must meet any such annulus at least two times.

\begin{lemma}\label{minimally12}
Let $P_i'$ be an annulus in $S^3-N(L_1\cup L_2)$ as above: namely, $P_i'$ is essential in $S^3-N(L_i)$.  
Then for $i=1,2$, $P_i'$ must intersect $J$ at least twice.
\end{lemma}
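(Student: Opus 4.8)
The plan is to exploit the fact that, because $L_i$ is the nontrivial torus knot $T(p/2,q/2)$, the complement $S^3-N(L_i)$ contains, up to isotopy, a single essential annulus with both boundary components on $\partial N(L_i)$, namely the cabling annulus $A=F-N(L_i)$, and this annulus has a very rigid complementary structure. First I would record that since $P_i'$ is essential in $S^3-N(L_i)$, it is isotopic there to $A$; in particular $P_i'$ is two-sided and separating, and cutting $S^3-N(L_i)$ along $P_i'$ produces two solid tori $W_1,W_2$. Their cores $c_1,c_2$ are the cores of the two solid tori $V_1,V_2$ into which the unknotted torus $F$ splits $S^3$, and a direct linking-number computation on $F$ gives $\mathrm{lk}(c_1,L_i)=q/2$ and $\mathrm{lk}(c_2,L_i)=p/2$, up to relabelling.

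With this structure in hand, the argument splits into a parity observation and a winding-number obstruction. The parity observation is that $J$ is a single closed curve while $P_i'$ is a two-sided separating surface, so after putting $J$ transverse to $P_i'$ the geometric intersection number $|J\cap P_i'|$ is even: traversing $J$ toggles which complementary region one lies in, and $J$ must return to its starting side. Hence it suffices to rule out $|J\cap P_i'|=0$.

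To rule this out, suppose $J\cap P_i'=\varnothing$. Then $J$ lies in one of the complementary solid tori, say $W_j$, so $[J]=k\,[c_j]$ in $H_1(W_j)$ for the winding number $k\in\ZZ$, and hence also in $H_1(S^3-N(L_i))$. Applying the linking homomorphism $\mathrm{lk}(\,\cdot\,,L_i)$ gives $r_i=\mathrm{lk}(J,L_i)=k\cdot\mathrm{lk}(c_j,L_i)$, a multiple of either $p/2$ or $q/2$. This contradicts the standing assumption of this subsection that $r_i$ is not a multiple of $p/2$ and, since $q>2$, not a multiple of $q/2$. Combining this with the parity step yields $|J\cap P_i'|\ge 2$.

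The main obstacle is the first paragraph: one must be careful that $P_i'$ is only assumed essential in $S^3-N(L_i)$, while it physically lives in the larger complement $S^3-N(L_1\cup L_2)$. Thus the isotopy to $A$, the identification of the complementary solid tori, and the determination of their core linking numbers should all be carried out inside $S^3-N(L_i)$ (where $L_2$ is filled back in), and the resulting linking numbers then read off in $S^3$ via an orientation-preserving ambient isotopy fixing $L_i$. The remaining steps, namely the parity count and the winding-number computation, are routine once this solid-torus decomposition and the values $p/2$ and $q/2$ of the core linking numbers are in place.
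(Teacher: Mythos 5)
Your proposal is correct and follows essentially the same route as the paper: a parity argument showing $|J\cap P_i'|$ is even (the paper realizes the separating surface as the unknotted torus $P_i'\cup B$ with $B\subset \bdy N(L_i)$, you cut the knot exterior along the annulus itself), followed by the same winding-number contradiction, namely that if $J$ missed $P_i'$ it would lie in a complementary solid torus whose core links $L_i$ in $p/2$ or $q/2$ times, forcing $r_i$ to be a multiple of $p/2$ or $q/2$ against the standing assumptions. The extra care you take in identifying $P_i'$ with the cabling annulus via the uniqueness of essential annuli in torus knot exteriors is exactly what the paper relies on implicitly when it asserts $P_i'\cup B$ is unknotted.
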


\begin{proof}
We assume $P'_i$ intersects $J$ transversely.

Note that $\bdy P_i'$ cuts $\bdy N(L_i)$ into two annuli. Call one such annulus $B$. Then $P_i'\cup B$ is an unknotted torus in $S^3$, hence separating. Since $J$ does not meet $B\subset \bdy N(L_i)$, $J$ must meet $P_i'$ an even number of times. 

Assume $\vert P_i'\cap J\vert = 0$. The torus $P_i'\cup B$ splits $S^3$ into two solid tori. Then $J$ can be isopoted into one of these solid tori, call it $V$.

Consider the winding number $\omega$ of $J$ in $V$. Observe that the linking number $r_i$ must be equal to $\omega\frac{p}{2}$ or $\omega\frac{q}{2}$ depending on which side of $T$ the solid torus $V$ lies. This contradicts our assumption on linking numbers since $r_i$ is not a multiple of $\frac{p}{2}$ and $\frac{q}{2}$.

Therefore, $\vert P_i'\cap J\vert \geq 2$.
\end{proof}

Observe that when we put $L_1$ on $T\times\{0\}$ and $L_2$ on $T\times\{1\}$, then there is a torus $T\times\{1/2\}$ between the two link components of $T(p,q)$ that is disjoint from $L_1$, $L_2$, and divides $S^3$ into two solid tori, each containing one torus knot $L_i$, hence $T\times\{1/2\}$ is essential in $S^3-N(T(p,q))$. Observe also that $T\times\{1/2\}$ is unknotted and doesn't intersect the essential annuli $P'_1 = T\times\{0\} - L_1$ and $P'_2 = T\times\{1\}- L_2$. The next lemma considers any unknotted essential torus in $S^3-N(T(p,q))$. 

\begin{lemma}\label{onthesameside} 
For any unkotted essential torus $T$ in $S^3-N(T(p,q))$, there is an annulus $P'_i$ in $S^3-N(T(p,q))$ which has boundary on $\bdy N(L_i)$, is essential in $S^3-N(L_i)$, and is disjoint from $T$ for each $i=1,2$. 
\end{lemma}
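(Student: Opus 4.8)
The plan is to exploit the Seifert fibered structure of $S^3\setminus N(T(p,q))$ in order to show that, up to isotopy, $T$ is forced to be the single standard torus $T_0=T\times\{1/2\}$ sitting between $L_1$ and $L_2$, and then to read the two desired annuli directly off that standard position. First I would record the fibration: $S^3$ carries the Seifert fibration whose generic fiber is the torus knot $T(p/2,q/2)$ and whose two exceptional fibers are the cores of the two solid tori bounded by the Heegaard torus $F$, of orders $p/2$ and $q/2$ (both at least $2$, since $p>4$ and $q>2$). As $L_1$ and $L_2$ are parallel $(p/2,q/2)$ curves on $F$, they are two regular fibers, so $S^3\setminus N(L_1\cup L_2)$ is Seifert fibered with base orbifold an annulus carrying two cone points of orders $p/2,q/2$, whose two boundary circles are the images of $\bdy N(L_1)$ and $\bdy N(L_2)$.

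Next I would reduce an arbitrary unknotted essential torus to this picture. I would first note that $T$ must separate $L_1$ from $L_2$: if both components lay in the same solid torus cut off by the unknotted $T$, then the complementary solid torus would be disjoint from the link and its meridian disc would compress $T$, contradicting essentiality. Then, since the base orbifold has nonempty boundary there are no closed horizontal surfaces, so every essential torus is isotopic to a vertical one, i.e. to the preimage of an essential simple closed curve in the base. I would enumerate these curves: up to isotopy the only ones that are neither boundary parallel nor bound a disc containing at most one cone point are (i) the core curve cutting the annulus into two pieces, each an annulus with a single cone point, and (ii) a curve enclosing both cone points.

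I would then identify which curve survives the unknottedness hypothesis. Curve (i) gives exactly $T_0=T\times\{1/2\}$, parallel to $F$ and placed between $L_1$ and $L_2$, which is unknotted and (using that both cone orders exceed $1$) is genuinely essential rather than boundary parallel. Curve (ii) bounds, on its cone-point side, a Seifert piece over a disc with two cone points of orders $p/2,q/2$, namely a $T(p/2,q/2)$ torus knot exterior; filling back $L_1,L_2$ on the other side produces a solid torus, so this torus is $\bdy N(T(p/2,q/2))$ and is knotted. Hence the unknotted torus $T$ must be isotopic to $T_0$. Finally, after an ambient isotopy carrying $T$ to $T_0$, I would place $L_1$ on $T\times\{0\}$ and $L_2$ on $T\times\{1\}$ and set $P_1'=(T\times\{0\})\setminus N(L_1)$ and $P_2'=(T\times\{1\})\setminus N(L_2)$. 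Each $P_i'$ has both boundary components on $\bdy N(L_i)$, is the cabling annulus of the torus knot $L_i$ and hence essential in $S^3\setminus N(L_i)$ by~\cite{Incompressible}, and lies at a level disjoint from $T\times\{1/2\}=T_0$; pulling back by the inverse isotopy gives annuli disjoint from the original $T$.

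The hard part will be the classification step: justifying cleanly that $T$ may be isotoped to a vertical torus (the absence of horizontal closed tori over a base with boundary) and correctly ruling out candidate (ii) by computing that its exterior side is a torus knot complement, so that the resulting torus is knotted and therefore excluded by hypothesis. The separation claim and the check that both cone orders are at least $2$ (so $T_0$ is essential, not boundary parallel) are the supporting points that must be in place for this argument to go through.
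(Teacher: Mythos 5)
Your Seifert-fibered setup is correct (the base orbifold is an annulus with cone points of orders $p/2,q/2\geq 2$, closed essential tori are vertical, no horizontal closed surfaces), but the enumeration step is wrong, and the proof collapses there. The base orbifold is combinatorially a four-holed sphere (two boundary circles $\partial_1,\partial_2$ coming from $L_1,L_2$, two cone points $c_1,c_2$), and since both cone orders are at least $2$ it is a hyperbolic orbifold: it carries \emph{infinitely many} isotopy classes of essential simple closed curves, not two. Besides your curve (i) there is the ``crossed'' curve separating $\{c_1,\partial_2\}$ from $\{c_2,\partial_1\}$, and there are the images of curve (i) under powers of the Dehn twist along a curve enclosing both cone points; these are pairwise non-isotopic in the orbifold (for instance, their conjugacy classes in $\pi_1^{orb}\cong \mathbb{Z}/(p/2)\ast\mathbb{Z}/(q/2)\ast\mathbb{Z}$ have different cyclically reduced syllable lengths), and vertical tori over non-isotopic base curves are non-isotopic in a Seifert fibered space with hyperbolic base. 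So $S^3-N(L_1\cup L_2)$ contains infinitely many pairwise non-isotopic essential tori of this kind. Crucially, every one of them still separates the two cone points, so after filling $L_1$ and $L_2$ back in (they are regular fibers) the fibration extends to the standard Seifert fibration of $S^3$, and each such torus becomes isotopic to the Heegaard torus: they are all \emph{unknotted}. Hence your key claim --- that an unknotted essential torus must be isotopic to $T_0=T\times\{1/2\}$ --- is false, and your final construction, which places $P_1',P_2'$ at the levels $T\times\{0\}$ and $T\times\{1\}$ and uses disjointness from $T\times\{1/2\}$ specifically, proves the lemma only for tori isotopic to $T_0$, not for an arbitrary unknotted essential torus.

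For comparison, the paper's proof needs no classification at all: fix the annulus $A_i$ of Lemma~\ref{minimally12}; if $T$ misses it, done; otherwise, incompressibility of $T$ lets one remove circles of $T\cap A_i$ that are trivial in $A_i$, so the remaining intersection curves are parallel to the core of $A_i$, hence to $L_i$; this isotopes $L_i$ onto $T$ itself, and a slight push-off of $T-N(L_i)$ is the desired $P_i'$, essential in $S^3-N(L_i)$ because it is the cabling annulus of the nontrivial torus knot $L_i$ sitting on the unknotted torus $T$. Your framework is salvageable in the same spirit, if you drop the classification: isotope $T$ to a vertical torus over a curve $\gamma$; your case (ii) argument correctly shows that if $\gamma$ does not separate the cone points then $T$ caps off to $\partial N(T(p/2,q/2))$ and is knotted, so $\gamma$ separates them; then let $P_i'$ be the vertical annulus over an arc with both endpoints on $\partial_i$, contained in the component of the base cut along $\gamma$ containing $\partial_i$ and encircling the cone point of that component. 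This annulus is disjoint from $T$, and after the filling that recovers $S^3-N(L_i)$ it becomes the cabling annulus of $L_i$, hence essential. But the step ``$T$ is isotopic to $T_0$'' cannot appear in any correct proof, so as written the argument has a genuine gap.
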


\begin{proof}
Let $T$ be an unknotted essential torus in $S^3-N(T(p, q))$. Fix an annulus $A_i$ as in Lemma~\ref{minimally12}. If $T\cap A_i = \emptyset$, then we are done. Otherwise,  $T$ intersects $A_i$ in simple closed curves. Because $T$ is incompressible, we may isotope so that no loop of $T\cap A_i$ bounds a disk in $A_i$. So all loops of $T\cap A_i$ are isotopic to the core of the annulus $A_i$, implying they are isotopic to $L_i$. Use this isotopy to embed $L_i$ in $T$. Then isotope $T-L_i$ slightly to give an annulus $P_i'$ that is disjoint from $T$.
\end{proof}

\begin{lemma}\label{justone}
If $M(p, q, r)$ has an essential annulus $A$, then $\partial A$ is contained in just one boundary component $B$ of  $M(p, q, r)$. Furthermore, $A$ is inessential in the exterior of $B$.
\end{lemma}

\begin{proof} 
Suppose that $M(p, q, r)$ has an essential annulus $A$.
By Lemma~\ref{annulitwocomponent}, $\partial A$ is contained in just one of $\partial_{L_1}M$, $\partial_{L_2} M$, or $\partial_{J} M$. Call $B$ this boundary component in which $\partial A$ lies. 

If $B = \partial_{J} M$, then $A$ is not essential in the exterior of $B$ because the unknot torus has no essential annuli \cite[page 15]{hatcher2007notes}.
 
Consider now that $B$ is equal to $\partial_{L_i} M$ with $i = 1$ or $2$. If $A$ is essential in the exterior of $B$, then $A$ is equal to $F-B$ where $F$ is the unknotted torus in which the torus knot $B$ lies by \cite{Incompressible}. 
However, by Lemma~\ref{minimally12}, $J$ must intersect $A$, a contradiction.
\end{proof}

\begin{proposition}\label{noessentialannuli}
Consider $p, q, r$ integers greater than two such that $p+q \geq r >1$. Furthermore, assume that $gcd(p, q) = 2$ and $r$ is odd, different from $p\pm 1$, $p$, and $kq\pm 1$ for $k\geq 1$. Then, $M(p, q, r)$ is anannular.
\end{proposition}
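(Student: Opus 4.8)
The plan is to assume that $M(p,q,r)$ contains an essential annulus $A$ and to derive a contradiction. By Lemma~\ref{annulitwocomponent} no essential annulus of $M(p,q,r)$ runs between two distinct boundary tori, so both components of $\partial A$ lie on a single boundary torus, which is either $\partial N(L_i)$ for some $i$ or $\partial N(J)$. I would treat these two possibilities separately, in each case passing to a simpler ambient manifold obtained by forgetting the other components.

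First suppose $\partial A\subset\partial N(L_i)$. Since $A$ misses $L_j$ and $J$, I view $A$ inside the torus knot exterior $S^3-N(L_i)$. A surgery argument shows $A$ is incompressible there: compressing would yield a disc bounded by an essential curve of $\partial N(L_i)$, forcing the nontrivial torus knot $L_i=T(p/2,q/2)$ to be unknotted. Hence $A$ is either essential or boundary parallel in $S^3-N(L_i)$. If $A$ is essential, then by the uniqueness of the essential annulus in a torus knot exterior (\cite{Incompressible}) it is the cabling annulus, so Lemma~\ref{minimally12} forces $A$ to meet $J$ at least twice, contradicting $A\cap J=\varnothing$. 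If instead $A$ is boundary parallel, with product region $W$ between $A$ and an annulus $B\subset\partial N(L_i)$, then essentiality of $A$ in $M(p,q,r)$ forces $J$ and/or $L_j$ into $W$. I then push the trapped component across the product $W$ onto $\partial N(L_i)$: if $J\subset W$, then $J$ becomes an unknotted curve on $\partial N(L_i)$, hence a meridian, giving $r_j=\mathrm{lk}(L_j,J)=0$; if $L_j\subset W$, then $L_j$ becomes a curve on $\partial N(L_i)$ of knot type $T(p/2,q/2)$, which forces the longitude coefficient of its slope to be $\pm1$ (any larger winding gives a nontrivial satellite), whence $r_j=\pm r_i=r_i$. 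Either outcome contradicts Lemma~\ref{LinkingNumber}, which gives $r_i,r_j\neq0$ and $r_i=r_j\pm1$.

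Next suppose $\partial A\subset\partial N(J)$. As $J$ is unknotted, $V=S^3-N(J)$ is a solid torus, and I use that a properly embedded incompressible annulus in a solid torus with boundary on $\partial V$ is boundary parallel (the compressible case reduces to this after observing that $\partial A$ must then be the longitude of $J$). So $A$ is parallel to $B\subset\partial N(J)$ with product region $W$, and essentiality forces at least one $L_k$ into $W$. Capping $A$ with $B$ gives a torus $\hat T=A\cup B$ isotopic to $\partial N(J)$, hence unknotted. If both or neither of $L_1,L_2$ lie in $W$, then the complementary solid torus of $\hat T$ is disjoint from $L_1\cup L_2$ and exhibits $A$ as boundary parallel in $M(p,q,r)$, a contradiction; so exactly one component $L_k$ lies inside $\hat T$ together with $J$, while $L_m$ ($m\neq k$) lies outside. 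Then $\hat T$ is an unknotted essential torus in $S^3-N(T(p,q))$, and Lemma~\ref{onthesameside} produces an annulus $P_m'$ essential in $S^3-N(L_m)$ and disjoint from $\hat T$. Being built near $L_m$ on the outer side of $\hat T$, the annulus $P_m'$ is disjoint from $J$, contradicting Lemma~\ref{minimally12}.

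I expect the boundary-parallel sub-cases to be the crux. In the $\partial N(L_i)$ case the main work is the slope-and-linking bookkeeping: verifying that a trapped unknot $J$ can only sit as a meridian of $L_i$ and that a trapped torus knot $L_j$ can only sit as a slope of longitude coefficient $\pm1$, so that the relations $r_i=r_j\pm1$ and $r_i,r_j\neq0$ of Lemma~\ref{LinkingNumber} are violated. In the $\partial N(J)$ case the delicate points are confirming that, once a single $L_k$ is trapped inside $\hat T$, the torus $\hat T$ is genuinely incompressible and not boundary parallel in $S^3-N(T(p,q))$ (using $r_k\neq0$ so that $L_k$ winds nontrivially in $\hat T$), and checking that the annulus furnished by Lemma~\ref{onthesameside}, which is only asserted to be disjoint from $\hat T$, necessarily lies on the $L_m$-side and therefore misses $J$.
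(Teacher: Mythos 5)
Your strategy is genuinely different from the paper's: the paper pushes the essential annulus $A$ into $M_i=S^3-N(L_i\cup J)$ and leans on Lee's theorem that $M_i$ is hyperbolic, whereas you forget two components at once and work in the torus knot exterior $S^3-N(L_i)$ or in the solid torus $S^3-N(J)$. Your first case ($\partial A\subset\partial N(L_i)$) is nearly correct: the incompressibility argument and the appeal to Lemma~\ref{minimally12} when $A$ is essential in $S^3-N(L_i)$ are fine (you do not even need uniqueness of the cabling annulus for that). But the boundary-parallel sub-case already rests on an invalid move: a component trapped in the product solid torus $W$ cannot in general be ``pushed across $W$ onto $\partial N(L_i)$''. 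A knot inside a solid torus may wind around it several times or lie inside a ball; it need not be isotopic to any curve on $\partial W$, let alone on $\partial N(L_i)$. You patch this for $L_j$ with the satellite remark, but not for $J$; the correct route there is a winding-number argument (if the core of $W$ is not a meridian of $L_i$, then $W$ is knotted, forcing the unknot $J$ into a ball and $r_i=0$; if the core is a meridian, then $r_j=\omega\cdot 0=0$), so this gap is fixable.

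The case $\partial A\subset\partial N(J)$ is where the proof genuinely breaks, in three places. First, you dismiss the possibility that $A$ is compressible in $S^3-N(J)$ with a parenthesis; but the compressing disc may pass through $L_1\cup L_2$, in which case $A$ is compressible in $S^3-N(J)$ yet incompressible in $M(p,q,r)$ and not boundary parallel in $S^3-N(J)$, so nothing ``reduces'' to the parallel case. Second, $\hat T=A\cup B$ need not be unknotted: the core of $B$ is an essential curve on the unknotted torus $\partial N(J)$, hence can be any torus knot $T(a,b)$, and then $\hat T$ is knotted; the paper has to exclude this separately, using atoroidality of torus-knot exteriors applied to the trapped component. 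Third, and fatally, your side bookkeeping is inconsistent: since the parallelism region $W$ lies inside $S^3-N(J)$, the circle $J$ is \emph{outside} $\hat T=\partial W$, on the same side as $L_m$ (your sentence ``$L_k$ lies inside $\hat T$ together with $J$'' is false in your own setup). Consequently the annulus $P_m'$ supplied by Lemma~\ref{onthesameside}, which lies on the outer side, has no reason at all to be disjoint from $J$, and your final contradiction with Lemma~\ref{minimally12} evaporates. The correct application, as in the paper, is to the \emph{trapped} component $L_k$: being disjoint from $\hat T$ with boundary on $\partial N(L_k)$, the annulus $P_k'$ must lie inside $W$, which is disjoint from $J$. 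Finally, your disposal of the ``both $L_1,L_2$ in $W$'' possibility also fails: the region between $\hat T$ and $\partial N(J)$ is a product only when $\partial A$ has longitudinal slope, so for cable slopes $A$ need not be boundary parallel in $M(p,q,r)$ there; this is precisely the sub-case where the paper invokes hyperbolicity of $M_1$ and $M_2$ (Lee's theorem), a tool your argument never uses and for which you offer no substitute.
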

\begin{proof}Suppose $M(p, q, r)$ contains an essential annulus $A$.
By the last lemma, $\partial A$ is contained in just one boundary component $B$ of  $M(p, q, r)$ and $A$ is inessential in the exterior of $B$. We denote by $M_B$ the exterior of $B$ in $S^3$.
  
Since the annulus $A$ is inessential in $M_B$, then $A$ is compressible, boundary compressible, or boundary parallel in $M_B$.

If $A$ is boundary parallel to an annulus $A'$ in $M_B$, then $A\cup A'$ bounds a solid torus $V$ in $M_B$. Since  $A$ is not boundary parallel in $M(p, q, r)$, one component $C$ of $\partial M(p, q, r)$ must be inside $V$. If there is a meridian disk $D$ of $V$ which does not intersect $C$, then the surgery of $\partial V = A\cup A'$  along $D$ yields a sphere containing $C$ in one side and the boundary component which contains $\partial A$  in the other side. Thus, $M(p, q, r)$ would be reducible, contradicting Lemma~\ref{irreducible}. So, the wrapping number of $C$ in $V$ is greater than zero. 

Consider first $A'\subset N(J)$. Then, $C$ is equal to one link component $L_j$ of $T(p, q)$.
If $\partial V$ is boundary parallel to $L_j$, then the core of $V$, which is the same as the core of $A'$, is $L_j$.
So, $L_j$ can be embedded in the unknotted torus $\partial N(J)$. Thus, $L_j$ can be described as a $(\pm \frac{p}{2}, \pm \frac{q}{2})$ or $(\pm \frac{q}{2}, \pm \frac{p}{2})$-torus knot on $\bdy N(J)$ because equivalent torus knots have been
classified; see \cite{Rolfsen}. This implies that the linking number $r_j$ of $L_j$ and $J$ is $\frac{p}{2}$ or $\frac{q}{2}$, which is a contradiction.

If $\bdy V$ is not an unknotted torus in $S^3$, then it forms an essential torus in $S^3-N(L_j)$. But there are no essential tori in torus knot exteriors by \cite{Incompressible}. 
Thus, $\partial V$ is a trivial torus in $M(p, q, r)$. 

Consider that the other link component $L_i$ of $T(p, q)$ is also contained in $V$. Then, the core of $A'$ is not the trivial torus knot $T(a, \pm 1)$ otherwise the disk with boundary the longitude of $\partial N(J)$ is a boundary compressible disk for $A$ in $M(p, q, r)$, which is impossible. Thus, the core of $A'$ is the trivial torus knot $T(\pm 1, b)$ with $\vert b \vert \neq 1$.
If $b = 0$, then $L_i$ and $L_j$ would have zero linking numbers with $J$, a contradiction. Thus, $\vert b \vert > 1$.
Then, $\partial V$ is essential in $M_j$ and $M_i$, but this is not possible since at least one of these manifolds is hyperbolic by \cite[Proposition 5.7]{unknotted}. So, $L_i$ is not contained in $V$. Thus, $\partial V$  is essential in $S^3 - N(T(p, q))$.
By Lemma~\ref{onthesameside}, there exists an annulus $P'_j$, essential in $S^3-N(L_j)$, such that $\bdy V$ does not intersect $P'_j$. Observe that $P'_j$ must therefore lie entirely inside of $V$. So, $P'_j$ does not intersect $J$, contradicting Lemma~\ref{minimally12}.

Now assume that $A'\subset N(L_j)$. Consider first that $J$ is inside $V$. Then, $A'$ is a meridional annulus of $N(L_j)$ otherwise $\partial V$ would be an essential torus for $J$, which is impossible. 
If the other link component $L_i$ of $T(p, q)$ is outside $V$, then the linking number between $L_i$ and $J$ is zero, a contradiction. So, $L_i$ is inside $V$. Hence, $\partial V$ is an unknotted essential torus in $S^3 - N(T(p, q))$. 
Lemma~\ref{onthesameside} says that there exists an annulus $P'_j$, essential in $S^3-N(L_j)$, such that $\bdy V$ does not intersect it. Then, $P'_j$ must lie in the outside of $V$. So,  $J$ does not intersect $P'_j$, which is impossible by Lemma~\ref{minimally12}.
Thus, $J$ doesn't live in $V$ and only $L_i$ lies in $V$.
If $\partial V$ is boundary parallel to $N(L_i)$, then $L_i$ is the core of $V$. Thus, we can embed $L_i$ in $\partial N(L_j)$. Then, $L_i$ is isotopic to $L_j$ in $M(p, q, r)$ since a torus knot is not a satellite knot \cite{Incompressible}, which implies $r_1 = r_2$ contradicting Lemma~\ref{LinkingNumber}. Thus, $\partial V$ is trivial, which implies that $A'$ is a meridional annulus of $N(L_j)$, and so the linking number between $L_i$ and $J$ is zero, a contradiction. Therefore, $A$ can't be boundary parallel in $M_B$. 

Assume now that $A$ is compressible in $M_B$. Then, there is a compression disk $D$ for $A$ in $M_B$. The surgery of $A$ along $D$ yields two disks, $D_1$ and $D_2$, such that $\partial A = \partial D_1 \cup \partial D_2$.
Assume first that $B$ is equal to one $\partial_{L_i} M(p, q, r)$. 
Since the exterior of $L_i$ is boundary irreducible, each $\partial D_i$ bounds a disk in $\partial N(L_i)$. Thus, by considering a disk with boundary in $A$ close to $\partial D_i$, we can see that $A$ is also compressible in $M(p, q, r)$, a contradiction.
Consider now that $B$ is equal to $\partial_J M(p, q, r)$.
If one $\partial D_i$ bounds a disk on $\partial N(J)$, then we obtain a contradiction as before. 
Now suppose that none $\partial D_i$ bounds a disk on $\partial N(J)$. Then, both $\partial D_1$ and $\partial D_2$ are either isotopic to the longitude or meridian of $\partial N(J)$ implying that $A$ is also boundary parallel in $M_B$. However, we have already ruled out this possibility.

Therefore, as $A$ is an incompressible and not boundary parallel surface in $M_B$, $A$ is essential in $M_B$, which is not possible by Lemma~\ref{justone}.
\end{proof} 

\subsection{Second Case: $q = 2$}
Finally we prove Proposition~\ref{annuli133} for when $q = 2$. Thus, we consider that $q = 2$ in this subsection.

The two link components of $T(p,2)$, $L_1$ and $L_2$, are both equal to the trivial torus knot $T(\frac{p}{2}, 1)$.
Denote $S^3-N(L_i\cup J)$ by $M_i$ with $i= 1, 2$.

\begin{proposition}\label{noessentialtorus}
Consider $a>1$ and $a\geq b>0$. The exterior of the link $T(a, 1)\cup C$, where $T(a, 1)$ is the $(a, 1)$-trivial torus knot and $C$ is a circle encircling $b$ longitudinal strands of $T(a, 1)$, as shown on the left of Figure~\ref{1}, has no essential tori.
\end{proposition}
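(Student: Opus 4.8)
The plan is to exploit that $T(a,1)$ is unknotted, so that $V:=S^3-N(T(a,1))$ is a solid torus and the circle $C$ sits inside $V$ as an unknot with $\mathrm{lk}(C,T(a,1))=b$. Since $C$ is also unknotted, it is symmetric to work in $U:=S^3-N(C)$, another solid torus: the disc $D$ with $\bdy D=C$ meets $T(a,1)$ in exactly $b$ points, so $D\cap U$ is a meridian disc $\Delta$ of $U$ punctured $b$ times by the knot $T(a,1)$, all punctures of the same sign because the $b$ strands are coherently oriented. Consequently the whole exterior $X:=S^3-N(T(a,1)\cup C)$ carries a properly embedded $b$-punctured disc $P:=D\cap X$, whose boundary is the longitude of $C$ together with $b$ meridians of $T(a,1)$. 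Because $\mathrm{lk}(C,T(a,1))=b\neq 0$, the link is nonsplittable and $X$ is irreducible (same linking-number argument as in Proposition~\ref{irreducible}), and $P$ is incompressible in $X$.

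Given an essential torus $T$ in $X$, I would isotope it to meet $P$ transversely and with $\vert T\cap P\vert$ minimal. Since $X$ is irreducible and $T,P$ are incompressible, every loop of $T\cap P$ is essential in $P$; as $P$ is planar, each such loop is either parallel to $\bdy\Delta$ or encircles a subcollection of the $b$ punctures. The key reduction is that, once $T\cap P=\varnothing$, the torus $T$ lies in the manifold obtained by cutting $X$ along $P$, which is $U$ cut along its meridian disc $\Delta$ with the arcs of $T(a,1)$ removed, i.e.\ a genus-$b$ handlebody. A handlebody has free fundamental group, so it contains no closed incompressible surface; hence $T$ would compress in this handlebody, and the compressing disc, being disjoint from $P$, would compress $T$ in $X$, contradicting essentiality. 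Thus it suffices to remove all of $T\cap P$.

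To do this I would run an innermost-loop analysis in $P$. An innermost loop bounding a puncture-free subdisc of $P$ gives a compression of $T$ (by irreducibility of $X$), contradicting minimality; so any innermost loop bounds a once-punctured subdisc, that is, it is a meridian of $T(a,1)$. The remaining intersections therefore consist of meridional loops of $T(a,1)$ (and possibly loops parallel to $\bdy\Delta$). Using incompressibility of $P$ and $T$ one assembles $T$ out of meridional annuli running along $T(a,1)$ together with an annulus properly embedded in $X$; the hypothesis $a\ge b>0$ guarantees that the $b$ punctures all come from the single unknotted component, so no loop around $1<k<b$ punctures can separate $T(a,1)$ into two genuinely knotted pieces, and the torus is forced to be parallel to $\bdy N(T(a,1))$ or to $\bdy N(C)$.

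The step I expect to be the main obstacle is precisely this last one: ruling out an essential torus meeting $P$ in loops that each enclose several punctures, equivalently showing that $C$ is not a nontrivial satellite inside $V$ and that $T(a,1)$ is not a nontrivial satellite inside $U$. The cleanest way to close this is to note that such a torus bounds a solid torus $W\subset S^3$ containing one component as a curve of winding number greater than one; since both components are unknots and $b\le a$, the linking-- and winding-number bookkeeping read off from $\Delta$ forces the core of $W$ to be isotopic to that component, so $\bdy W$ is boundary parallel in $X$ (a swallow--follow torus), hence inessential. Verifying that this winding-number count leaves no room for an extra knotted companion, using $a\ge b>0$, is the crux of the argument.
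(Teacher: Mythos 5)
Your proposal is not yet a proof: the case that carries all the mathematical content is left open, and you say so yourself. Concretely, the claim that ``any innermost loop bounds a once-punctured subdisc'' is unjustified --- an innermost loop of $T\cap P$ bounds a subdisc of $D$ containing no other intersection curves, but that subdisc may contain any number $1\le k\le b$ of punctures; the irreducibility/innermost-disc argument only eliminates $k=0$. The loops with $k\ge 2$, together with the loops parallel to $\partial\Delta$, are exactly the companion-torus (satellite) configurations, i.e.\ the possibility that one component is a nontrivial satellite in the complementary solid torus of the other, and this is precisely what the proposition is really asserting does not happen. Your proposed way to close it --- that ``winding-number bookkeeping read off from $\Delta$ forces the core of $W$ to be isotopic to that component, so $\partial W$ is boundary parallel'' --- is a restatement of the desired conclusion, not an argument: a solid torus containing an unknot with winding number greater than one need not have core isotopic to that unknot (push $T(a,1)$ off the Heegaard torus on which it lies: it sits in an unknotted solid torus with winding number $a$ whose core is a different unknot), and swallow--follow tori are in general \emph{essential} in link exteriors, so no soft appeal to both components being unknots can settle this. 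The missing idea, which is how the paper closes exactly this case, is a Dehn-surgery argument: after isotoping the link so that $C$ is the braid axis of $T(b,1)$, every $(1/s)$-surgery on $C$ returns $S^3$ and carries $T(b,1)$ to the torus knot $T(b,bs+1)$; one first shows an essential torus must be unknotted with the two components on opposite sides, and then such a torus would survive a suitable surgery as an essential torus in a nontrivial torus-knot exterior, contradicting atoroidality of torus-knot complements \cite{Incompressible}.

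A secondary, fixable gap: your assertion that $X$ cut along $P$ is a genus-$b$ handlebody presupposes that the $b$ arcs of $T(a,1)$ in the cut-open ball form a trivial tangle. When $b<a$ this is not immediate from the picture, since one arc winds through the $a-b$ strands that do not pass through $D$; triviality of the tangle follows from (and is essentially equivalent to) first isotoping those strands away so that $C$ becomes a genuine braid axis, which is the paper's explicit opening move. So even the part of your argument you regard as complete quietly uses the paper's first step, while the part you flag as ``the crux'' is the one that genuinely requires a new idea.
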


\begin{proof}Consider first $b>1$. By removing the last $a-b$ crossings of $T(a, 1)$, we can transform the knot $T(a, 1)$ into the knot $T(b, 1)$ and $C$ into the braid axis of the braid of the last knot. 

If the exterior of $T(b, 1)\cup C$ has an essential torus, then $T$  doesn't  intersect $C$ and bounds a solid torus containing $T(b, 1)$ in one side and $C$ in the other. Futhermore, since the unknot doesn't have any essential tori \cite[page 15]{hatcher2007notes}, $T$ is unknotted. 

After $(1/k)$-surgery along $C$, we obtain the $(b, bk+1)$-torus knots and a new torus $T'$ in the exterior of this torus knot. If we consider high slopes, we can assume that there is $(1/s)$-surgery along $C$ such that $T'$ is knotted. As $T$ is incompressible and not boundary parallell in the side containing $T(b, 1)$, $T'$ is also incompressible and not boundary parallell in the side containing the non-trivial torus knot $T(b, bs+1)$. Thus, $T'$ is an essential torus for $S^3 - N(T(b, bs+1))$. But, it implies a contradiction since torus knots don't have any essential tori by \cite{Incompressible}.

Now if $b=1$, then $T(a, 1)\cup C$ is the Hopf link which doesn't have any essential tori.
\end{proof}

\begin{corollary}\label{atoroidaltoruslink2} 
$M_1$ and $M_2$ are atoroidal.
\end{corollary}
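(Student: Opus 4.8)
The plan is to recognize each $M_i$ as the complement of a link to which Proposition~\ref{noessentialtorus} applies directly. Since $q=2$, the component $L_i$ is the trivial torus knot $T(\frac{p}{2},1)$. Forgetting the other component $L_j$ and looking only at the sublink $L_i\cup J$, the circle $J$ encircles exactly the $r_i$ strands of $L_i$ that pass through the disc $D$; within $L_i$ alone these are $r_i$ adjacent parallel longitudinal strands. Thus $L_i\cup J$ is precisely the link $T(a,1)\cup C$ of Proposition~\ref{noessentialtorus} with $a=\frac{p}{2}$ and $b=r_i$ and with $J$ playing the role of $C$, as drawn on the left of Figure~\ref{1}. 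Atoroidality of $M_i=S^3-N(L_i\cup J)$ is exactly the statement that this link has no essential tori, so once the identification and its numerical hypotheses are in place the corollary is immediate.

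It remains to verify the hypotheses $a>1$ and $a\geq b>0$. Since $p$ is even and $p>4$, we have $a=\frac{p}{2}\geq 3>1$. By Lemma~\ref{LinkingNumber}, $r_i>0$, so $b>0$. For the bound $b\leq a$, note that $r$ is odd, $p$ is even, and the cases $r=p\pm 1$ have already been excluded; together with $r\leq p+q=p+2$ this forces $r\leq p-3$. Using $r_1+r_2=r$ and $|r_1-r_2|=1$ from Lemma~\ref{LinkingNumber}, we get $r_i\leq\frac{r+1}{2}\leq\frac{p-2}{2}=\frac{p}{2}-1<\frac{p}{2}$, so indeed $b<a$. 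In particular $J$ encircles fewer than $\frac{p}{2}$, hence fewer than $p$, distinct strands, so no strand of $L_i$ is captured twice and the identification of $L_i\cup J$ with $T(a,1)\cup C$ is genuine rather than a wrapped configuration.

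With these hypotheses confirmed, Proposition~\ref{noessentialtorus} shows that $L_i\cup J=T(a,1)\cup C$ has no essential tori, and therefore $M_i$ is atoroidal for each $i=1,2$.

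The step I expect to require the most care is the first identification: justifying rigorously that restricting the full twisted-torus configuration $T(p,2)\cup J$ to a single component $L_i$ yields exactly the link of Proposition~\ref{noessentialtorus}, i.e.\ that the $r_i$ strands of $L_i$ seized by $J$ really are $r_i$ adjacent longitudinal strands of the unknot $T(\frac{p}{2},1)$ and that $J$ sits as the encircling circle $C$ of Figure~\ref{1}. Everything after this reduction is the short numerical verification above.
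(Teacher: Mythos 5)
Your proposal is correct and follows essentially the same route as the paper: identify $M_i$ with the complement of $T(\frac{p}{2},1)\cup C$ where $C$ encircles $r_i$ longitudinal strands, check $0<r_i<\frac{p}{2}$ using Lemma~\ref{LinkingNumber} together with the exclusions $r$ odd, $r\neq p\pm 1$, $r\leq p+2$, and then apply Proposition~\ref{noessentialtorus}. Your verification of $r_i\leq\frac{r+1}{2}\leq\frac{p}{2}-1$ is just a more explicit spelling-out of the parenthetical inequality the paper asserts.
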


\begin{proof}
It follows from the last proposition since $M_i = S^3-N(T(\frac{p}{2}, 1)\cup J)$, where $J$ is a circle encircling $r_i$ longitudinal strands of $T(\frac{p}{2}, 1)$ with $r_i< \frac{p}{2}$ (as $r = r_1 + r_2$ is odd, different from $p\pm 1$, and less than or equal to $ p + 2$).
\end{proof}

\begin{lemma}\label{atoroidaltoruslink}
The torus link $T(p, 2)$ is atoroidal.
\end{lemma}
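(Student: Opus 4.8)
The plan is to exploit the fact that $T(p,2)$ is a torus link, so its complement is Seifert fibered, and to read off atoroidality from the base orbifold. First I would record the structure of the link. Since $\gcd(p,2)=2$, the link $T(p,2)=L_1\cup L_2$ has two components, each a $(p/2,1)$-curve on the Heegaard torus $F$ and hence an unknot, and by Lemma~\ref{linkingnumber} their linking number is $p/2>0$. In particular $T(p,2)$ is nonsplit, so by the argument of Lemma~\ref{irreducible,boundaryirreducible} the complement $S^3-N(T(p,2))$ is irreducible and boundary irreducible. This lets me assume any essential torus has been isotoped to meet the standard pieces efficiently.

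Next I would identify the Seifert fibration. The generic fibers of the Seifert fibration of $S^3$ associated with $F$ are parallel $(p/2,1)$-curves, and $L_1,L_2$ are two such regular fibers; the associated exceptional fibers have orders $p/2$ and $1$, so there is a single genuine exceptional fiber, of order $p/2$. Removing the two regular fibers $L_1,L_2$ therefore exhibits $S^3-N(T(p,2))$ as a Seifert fibered space whose base orbifold $B$ is $S^2$ with two open disks removed together with one cone point of order $p/2$, i.e.\ an annulus carrying a single cone point of order $p/2$. Because every incompressible, boundary-incompressible surface in a Seifert fibered space is isotopic to a vertical or a horizontal one, it suffices to rule out essential vertical and horizontal tori.

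Horizontal tori are excluded immediately: a horizontal surface $S$ satisfies $\chi(S)=n\,\chi^{\mathrm{orb}}(B)$ for the covering degree $n$, while here $\chi^{\mathrm{orb}}(B)=-(1-2/p)<0$ since $p>2$, so $B$ supports no horizontal torus (for which $\chi(S)=0$). For vertical tori I would note that every essential simple closed curve in the annulus $B$ is isotopic to its core, and such a core separates off a sub-annulus disjoint from the single cone point; the vertical torus lying over the core is therefore boundary parallel, hence inessential. Thus $S^3-N(T(p,2))$ contains no essential torus and $T(p,2)$ is atoroidal.

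I expect the main obstacle to be the careful identification of the base orbifold and, within it, the verification that the core-parallel vertical torus is boundary parallel rather than essential (equivalently, that the cone point always lies to one side of it). As a more hands-on alternative that avoids quoting the Seifert machinery, one can cut $S^3-N(T(p,2))$ along the two annuli $F-N(T(p,2))$: an essential torus disjoint from them would lie in a solid torus and so be boundary parallel, while if it met them the intersection would consist of core-parallel loops, again forcing boundary parallelism. Ruling out these boundary-parallel cases using irreducibility and the nonzero linking number is then the crux, and relies, as above, on the absence of essential tori in torus-knot complements \cite{Incompressible}.
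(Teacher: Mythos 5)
Your proof is correct, but it takes a genuinely different route from the paper. The paper argues by isotopy and Dehn surgery: it first observes (via Figure~\ref{1}) that $T(p,2)$ is equivalent to the link $T(\frac{p}{2},1)\cup C$, where $C$ is the braid axis of $T(\frac{p}{2},1)$, and then invokes Proposition~\ref{noessentialtorus}, whose proof shows that an essential torus in the complement of $T(b,1)\cup C$ would have to be disjoint from $C$ and unknotted, and would then persist under a suitable $(1/s)$-surgery along $C$ to give an essential torus in a nontrivial torus knot complement, contradicting \cite{Incompressible}. You instead read everything off the Seifert fibered structure: $L_1,L_2$ are regular fibers of the $(\frac{p}{2},1)$-fibration of $S^3$, so the link exterior fibers over an annulus with a single cone point of order $\frac{p}{2}$; horizontal tori are killed by $\chi^{\mathrm{orb}}<0$ and vertical tori are boundary parallel (or bound fibered solid tori), using the standard vertical/horizontal classification of essential surfaces in Seifert fibered spaces -- a fact available from the paper's own reference \cite{hatcher2007notes}, together with irreducibility, which you correctly secure first. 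Your approach buys rigor and generality: it avoids the somewhat delicate ``choose a high surgery slope so that the image torus is knotted'' step in Proposition~\ref{noessentialtorus}, and it applies verbatim to any torus link $T(p,q)$ whose base orbifold has negative Euler characteristic (only the Hopf link $T(2,2)$, excluded here since $p>4$ in this section, needs a separate easy argument). What the paper's route buys is economy within its own architecture: Proposition~\ref{noessentialtorus} is proved anyway because it is also needed for Corollary~\ref{atoroidaltoruslink2} (atoroidality of $M_1$ and $M_2$, where $J$ is not a fiber and the Seifert argument does not apply), so the Lemma comes nearly for free once that proposition is in place.
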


\begin{proof}
Consider the link $T(\frac{p}{2}, 1)\cup C$, where $C$ is the braid axis of $T(\frac{p}{2}, 1)$. By pushing the circle $C$ following the longitudinal strands of $T(\frac{p}{2}, 1)$, as illustrated in Figure~\ref{1}, we can see that the links $T(\frac{p}{2}, 1)\cup C$ and $T(p, 2)$ are equivalent. Now, the result follows from Proposition~\ref{noessentialtorus}.
\end{proof}

\begin{figure}
\includegraphics[scale=1]{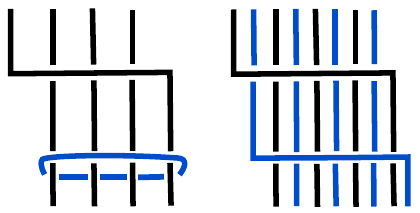} 
  %% Put the caption after the figure
  \caption{The blue disk is encircling 4 strands of the trivial torus knot $T(4, 1)$ in the first drawing. The second drawing is obtained by pushing the blue disk following the longitudinal strands of $T(4, 1)$.}
  %% Put the label at the very end -- otherwise the latex compiler can have problems. 
  \label{1}
\end{figure}

\begin{lemma}\label{annulitwocomponent2}
$M(p, 2, r)$ contains no annuli with boundary components in two different link components.
\end{lemma}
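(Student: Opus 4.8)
The plan is to follow the template of Lemma~\ref{annulitwocomponent} (the case $q>2$), adapting each step to $q=2$. The essential difference is that now $L_1$ and $L_2$ are the \emph{trivial} torus knots $T(p/2,1)$, so the manifolds $M_i=S^3-N(L_i\cup J)$ are no longer hyperbolic and I can no longer dispatch the mixed case by citing Lemma~5.2 of \cite{unknotted}. Suppose for contradiction that $A$ is an annulus in $M(p,2,r)$ with its two boundary components on distinct boundary tori. Since the components of the link are $L_1$, $L_2$, and $J$, there are exactly two cases: either $\partial A$ lies on $\partial N(L_1)$ and $\partial N(L_2)$, or one component of $\partial A$ lies on some $\partial N(L_i)$ and the other on $\partial N(J)$.

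For the first case I would reproduce verbatim the argument of Lemma~\ref{annulitwocomponent}, which never uses $q>2$. All it needs is that the annuli $A^j_{T(p,2)}$ are essential (Lemma~\ref{T(p, q)}, valid for any $d>0$), that $M(p,2,r)$ is irreducible and boundary irreducible (Proposition~\ref{irreducible}), and that $r_1$ and $r_2$ have opposite parities, so in particular $r_1\neq r_2$ (Lemma~\ref{LinkingNumber}). Concretely: if $A$ meets some $A^j_{T(p,2)}$, then incompressibility of both surfaces forces every loop of $A\cap A^j_{T(p,2)}$ to be essential and hence isotopic to the cores, so $\partial A$ is isotopic to $L_1\cup L_2$; this exhibits an isotopy of $L_1$ onto $L_2$ in $M(p,2,r)$ through $A$, forcing $r_1=\mathrm{lk}(L_1,J)=\mathrm{lk}(L_2,J)=r_2$ and contradicting the parity statement. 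If instead $A$ is disjoint from both $A^j_{T(p,2)}$, then pushing $\partial A$ into $N(L_1)$ and $N(L_2)$ yields wrapping number one in each solid torus, again identifying $\partial A$ with $L_1$ and $L_2$ and producing the same contradiction.

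The genuinely new work is the second case, an annulus $A$ with one boundary $\alpha$ on $\partial N(L_i)$ and the other $\beta$ on $\partial N(J)$, and this is where I expect the main obstacle to lie, precisely because $M_i$ is not hyperbolic. My plan is to analyze such annuli by hand inside a solid torus. Since $L_i=T(p/2,1)$ is unknotted, $W=S^3-N(L_i)$ is a solid torus in which $J$ sits with winding number equal to $\mathrm{lk}(L_i,J)=r_i$; by the standing hypotheses (recorded in Corollary~\ref{atoroidaltoruslink2}) we have $0<r_i<p/2$. An annulus $A\subset M(p,2,r)\subset W$ with $\beta\subset\partial N(J)$ and $\alpha\subset\partial W$ would make $J$ isotopic in $W$, through $A$, to a curve on the boundary torus $\partial W$, i.e.\ to a torus-knot slope of winding number $r_i$. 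Because $J$ is unknotted (it bounds the disc $D$), such a slope can only be realized with $r_i\in\{0,1\}$ or via a boundary-parallel/meridional annulus; I would then rule each of these out using $r_i\ge 1$ together with the fact that the third component $L_j$ punctures the relevant discs and so obstructs the isotopy, exactly as $r_i$ being neither $0$ nor a multiple of $p/2$ is used elsewhere.

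The delicate point, and the step I expect to require the most care, is making the last paragraph rigorous: showing that an \emph{essential} $L_i$--$J$ annulus forces $\beta$ to be a meridian or longitude of $J$ and hence pins down the winding number, rather than being absorbed into a compressible or boundary-parallel annulus that the hypotheses do not see. I would make this precise with a homological computation in $W-N(J)$, tracking $[\alpha]=\pm[\beta]$ against the classes of $\mu_{L_i}$, $\lambda_{L_i}$ and the meridian disc of $J$ (which $L_i$ meets $r_i$ times), and appealing to the classification of essential annuli in torus-knot and solid-torus complements \cite{Incompressible} to eliminate the remaining slopes. Once this computation yields the constraint $r_i\in\{0\}\cup(p/2)\mathbb{Z}$, the inequality $0<r_i<p/2$ gives the contradiction and completes the mixed case.
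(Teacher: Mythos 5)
Your first case (an annulus running from $\bdy N(L_1)$ to $\bdy N(L_2)$) is fine, and it is in fact a different route from the paper: the paper does not transfer the intersection-with-$A^{i}_{T(p,q)}$ argument of Lemma~\ref{annulitwocomponent}, but instead uses that for $q=2$ all boundary tori are unknotted, so both boundary curves of $A$ are torus knots, isotopic in $S^3$ through $A$, and then the classification of torus knots \cite{Rolfsen} plus linking numbers with the third component give the contradiction. Your transfer is legitimate, since the $q>2$ argument only uses Lemma~\ref{T(p, q)}, Proposition~\ref{irreducible}, and the parity statement of Lemma~\ref{LinkingNumber}, none of which require $q>2$.

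The genuine gap is in the mixed case ($\alpha\subset\bdy N(L_i)$, $\beta\subset\bdy N(J)$), which you correctly identify as the new work but whose key steps do not hold up. First, the annulus does not isotope $J$ to a curve on $\bdy W$: it isotopes $\beta$, which may be any $(m,n)$-curve on $\bdy N(J)$. Your proposed repair—a homological computation—cannot yield the constraint $r_i\in\{0\}\cup\frac{p}{2}\ZZ$. Writing $\alpha=x\mu_{L_i}+y\lambda_{L_i}$ and $\beta=m\mu_J+n\lambda_J$, the relation $[\alpha]=\pm[\beta]$ in $H_1(M(p,2,r))$ gives only $x=\pm nr_i$, $y\frac{p}{2}=\pm nr_j$, $yr_i=\pm m$, and these equations have coprime solutions for essentially any $0<r_i<\frac{p}{2}$: for example $p=10$, $(r_i,r_j)=(2,3)$, with $\alpha$ a $(3,10)$-curve and $\beta$ a $(5,6)$-curve. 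What rules such configurations out in the paper is not homology but geometry: $\alpha$ and $\beta$ lie on unknotted tori, hence are torus knots, and being isotopic in $S^3$ they must have the same parameters up to the standard symmetries (here $T(3,10)\neq T(5,6)$). Second, even after that classification, the surviving subcases are exactly $r_i=1$, which your hypotheses do not exclude ($r=3$, hence $r_i=1$, $r_j=2$, is allowed for every $p>4$), so your plan to dispose of "$r_i\in\{0,1\}$ using $r_i\geq 1$" is vacuous precisely where the difficulty sits. Those $r_i=1$ cases are the hard core of the paper's proof: they need the braid-axis trick (rewriting $T(p,2)$ so that $J$ is the axis of $(\sigma_1\sigma_2)^2\sigma_1^{p-3}$), an argument trapping $L_j$ inside a knotted torus to contradict the unknot having no essential tori, and the atoroidality statements of Lemma~\ref{atoroidaltoruslink} and Corollary~\ref{atoroidaltoruslink2}. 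Relatedly, your appeal to the classification of essential annuli in solid-torus and torus-knot complements presupposes $A$ is essential in $M_i$ or in $S^3-N(L_i)$; but $A$ is only assumed essential in $M(p,2,r)$, and a compression or parallelism in the smaller manifold may be obstructed by $L_j$—accounting for that third component is exactly the work your sketch defers.
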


\begin{proof}
Consider $A$ an annulus with boundary components in two different link components. 

First assume that these link components are $\partial_{L_1}M(p, 2, r)$ and $\partial_{L_2}M(p, 2, r)$. Since $\partial_{L_1}M(p, 2, r)$ and $\partial_{L_2}M(p, 2, r)$ are trivial tori, the boundary components 
$\partial A_1 \subset \partial_{L_1}M(p, 2, r)$ and $\partial A_2 \subset \partial_{L_2}M(p, 2, r)$ of $A$ are torus knots.

Consider first that $\partial A_1$ is the non-trivial torus knot $T(a, b)$. 
Then, $\partial A_2$ is any torus knot isotopic to $T(a, b)$  since $\partial A_1$ and $\partial A_2$ are isotopic to each other through $A$. Thus, $\partial A_2$ is equal to $T(a, b)$, $T(-a, b)$, $T(a, -b)$, $T(-a, -b)$, $T(b, a)$, $T(-b, a)$, $T(b, -a)$, or $T(-b, -a)$ \cite{Rolfsen}.
The linking number of $\partial A_1$ and $L_1$ is $b$. 
Because the linking number of $L_1$ and $L_2$ is  $\frac{p}{2}$ by Lemma~\ref{linkingnumber}, the linking number of $L_1$ and $\partial A_2$ is $a\frac{p}{2}$, $b\frac{p}{2}$, $-a\frac{p}{2}$, or $-b\frac{p}{2}$.
If  $b =  b\frac{p}{2}$ or $-b\frac{p}{2}$, then $\frac{p}{2} = \pm 1$, which is not possible since $p\geq 5$. 
If  $b =  a\frac{p}{2}$ or $-a\frac{p}{2}$, then $a, b$ would not be coprime, a contradiction.

Consider now that each $\partial A_i$ is trivial. If one $\partial A_i$ is the meridian of $\partial_{L_i}M(p, 2, r)$, then by capping off $\partial A_i$ with a meridional disk of $N(L_i)$ we obtain a boundary reducible disk for $M_j$, which is not possible since $M_j$ is boundary irreducible as the linking number between $T(\frac{p}{2}, 1)$ and $J$ is greater than zero.
Consider that $\partial A_i$ has the form $T(a, \pm 1)$  and $\partial A_j$ is one of $T(b, \pm 1)$ or $T(\pm 1, b)$. The linking number of $\partial A_i$ and $L_i$ is $\pm 1$, but the linking number of $\partial A_j$ and $L_i$ is $\pm \frac{p}{2}$ or $b\frac{p}{2}$, a contradiction. So, both $\partial A_1$ and $\partial A_2$ are of the form $T(\pm 1, c)$.
Thus, $\partial A_1$ is isotopic to $L_1$ and $\partial A_2$ is isotopic to $L_2$, and so $L_1$ and $L_2$ are isotopic through $A$ in $M(p, 2, r)$, which implies that $r_1 = r_2$ contradicting Lemma~\ref{LinkingNumber}.

Now consider that $\partial A_1 \subset \partial_{L_i}M(p, 2, r)$ and $\partial A_2 \subset \partial_{J}M(p, 2, r)$.
First assume that $\partial A_1$ is the non-trivial torus knot $T(a, b)$. 
Thus, $\partial A_2$ is equal to $T(a, b)$, $T(-a, b)$, $T(a, -b)$, $T(-a, -b)$, $T(b, a)$, $T(-b, a)$, $T(b, -a)$, or $T(-b, -a)$ \cite{Rolfsen}.
The linking number of $\partial A_1$ and $L_i$ is $b$. 
The linking number of $\partial A_2$ and $L_i$ is $ar_i$, $-ar_i$, $br_i$, or $-br_i$. 
It is not possible that $b = ar_i$ or $-ar_i$ since $a, b$ are coprime.
If $b = br_i$ or $-br_i$, then $r_i = 1$ or $-1$. So, $r_i = 1$ and $\partial A_2$ is the torus knot $T(b, \pm a)$.
The linking number of $\partial A_2$ and $J$ is $\pm a$, and the linking number of  $\partial A_1$ and $J$ is $a$ as $r_i = 1$. So, $\partial A_2$ is the torus knot $T(b, a)$.
By moving the $p-3$ last crossings of $T(p, 2)$ anticlockwise around the braid closure, we obtain the braid
$(\sigma_1\sigma_2)^2(\sigma_1)^{p-3}$ and $J$ becomes the braid axis of this braid. 
After switching the meridian to the longitude and the longitude to the meridian of the trivial torus $\partial N(J)$, the link $(\sigma_1\sigma_2)^2(\sigma_1)^{p-3}$ becomes inside a trivial solid torus bounded by $\partial N(J)$ and $\partial A_2$ becomes the torus knot $T(a, b)$. 
The component $L_i$ has wrapping number equal to one inside this solid torus as $r_i = 1$. 
Since $L_j$ doesn't puncture $A$, it must live in the knotted torus $(\partial N(L_i) - N(T(a, b)))\cup A \cup A \cup(\partial N(J) - N(T(a, b)))$ with core the torus knot $T(a, b)$. If $L_j$ has zero wrapping number in this knotted solid torus, then $L_1$ and $L_2$ have zero linking number, a contradiction. Thus, $L_j$ has non-zero wrapping number in this torus, which implies that $L_j$ has an essential torus, which is impossible since $L_j$ is the unknot \cite[page 15]{hatcher2007notes}.

Thus, $\partial A_1$ and $\partial A_2$ are trivial. By one previous argument, both $\partial A_1$ and $\partial A_2$ can't be meridians. Consider that $\partial A_1$, $\partial A_2$ are of the forms $T(\pm 1, a)$, $T(\pm 1, b)$, respectively. Then,
$\partial A_1$, $\partial A_2$ is isotopic to $L_i$, $J$, respectively, and then $L_i$ are $J$ are isotopic to each other. This implies that $r_j = \frac{p}{2}$, a contradiction.
Consider now that $\partial A_1$ has the form $T(\pm 1, a)$ and $\partial A_2$ has the form $T(b, \pm 1)$. The linking number of $\partial A_1$, $\partial A_2$ and $J$ is $\pm r_i, \pm 1$, respectively. Thus, $r_i = 1$, and so $r_j = 2$. 
The linking number of $\partial A_1$, $\partial A_2$ and $L_j$ is $\pm\frac{p}{2}, 2b$, respectively. So, $\vert b\vert >1$ since $p>4$. As $\partial A_1$ and $L_i$ are isotopic and $\partial A_1$ is isotopic to $\partial A_2$, we can isotopy $L_i$ to have the form $T(b, \pm 1)$ in $\partial N(J)$. Then, if we push $L_i = T(b, \pm 1)$ to the inside of $N(J)$, $\partial N(J)$ becomes 
an unknotted essential torus for $T(p, 2)$ as $r_j = 2$ and $\vert b\vert >1$, which is a contradiction with Lemma ~\ref{atoroidaltoruslink}.
If $\partial A_1$ has the form $T(a, \pm 1)$ and $\partial A_2$ has the form $T(\pm 1, b)$, then the linking number of $\partial A_1$ and $L_j$ is $a\frac{p}{2}$ and the linking number of $\partial A_2$ and $L_j$ is $\pm r_j$, which is not possible by the hypotheses.
Finally, if $\partial A_1$ has the form $T(a, \pm 1)$ and $\partial A_2$ has the form $T(b, \pm 1)$, then the linking number of $\partial A_1$ and $L_i$ is $\pm 1$ and the linking number of $\partial A_2$ and $L_i$ is $br_i$ implying that $r_i = 1$ and  $\vert b\vert = 1$, which we have already ruled out.
\end{proof}

\begin{proposition}\label{anannular2}
Consider $p, r$ positive integers such that $p+2 \geq r >1$. Furthermore, assume that $gcd(p, 2) = 2$ and $r$ is odd, different from $p\pm 1$. Then, $M(p, 2, r)$ is anannular.
\end{proposition}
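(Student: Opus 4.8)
The plan is to argue by contradiction, converting an essential annulus into a torus inside a manifold already known to be atoroidal. Suppose $M(p, 2, r)$ contains an essential annulus $A$. By Lemma~\ref{annulitwocomponent2}, both components of $\bdy A$ lie on a single boundary torus of $M(p, 2, r)$, so there are three cases according to whether $\bdy A$ lies on $\bdy_{L_1}M$, $\bdy_{L_2}M$, or $\bdy_J M$. In each case the two curves of $\bdy A$ are parallel on that torus and cut it into two annuli $B_1, B_2$; I would form the closed surface $T_A = A\cup B_1$, which after a small push-off is an embedded torus.

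First I would treat the case $\bdy A\subset \bdy_J M$. Regluing the solid torus $N(J)$ produces $S^3 - N(T(p, 2))$, which is atoroidal by Lemma~\ref{atoroidaltoruslink}, and $T_A$ becomes a torus there; hence $T_A$ compresses or is boundary parallel. If $T_A$ compresses, then either the compressing curve is essential on the $A$ part, giving a compression of $A$ and contradicting that $A$ is essential, or the compression takes place on the $B_1$ side, forcing $\bdy A$ to bound a disc on $\bdy N(J)$ and so to be inessential, again contradicting that $A$ is essential. For the cases $\bdy A\subset \bdy_{L_i}M$ I would instead reglue $N(L_j)$ with $j\neq i$ and work inside $M_i = S^3 - N(L_i\cup J)$, which is atoroidal by Corollary~\ref{atoroidaltoruslink2}; the same compressibility dichotomy applies.

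The remaining, and genuinely delicate, case is when $T_A$ is boundary parallel. Then $T_A$ cobounds a solid torus $V$ with a boundary torus of the ambient manifold, and because $A$ is \emph{not} boundary parallel in $M(p, 2, r)$, the reglued piece (either $N(J)$ or $N(L_j)$) must lie inside $V$ with nonzero winding number; otherwise a meridian disc of $V$ would either compress $T_A$ back onto $A$ or display $A$ as boundary parallel. Here I would run the solid-torus bookkeeping from the proof of Proposition~\ref{noessentialannuli}: compute the winding number of the core of $V$, read off the forced linking numbers, and invoke the standing hypotheses that $r$ is odd, $r\neq p\pm 1$, and $r_i$ is not a multiple of $\frac{p}{2}$ (recorded through Lemma~\ref{LinkingNumber}) to eliminate every configuration. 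The feature special to $q = 2$ is that $L_1, L_2$ are now \emph{unknots}, so the manifolds $M_i$ are Seifert fibered rather than hyperbolic and may themselves contain essential annuli; this is precisely why one cannot repeat the first-case argument and must instead build tori and appeal to the atoroidality statements of Corollary~\ref{atoroidaltoruslink2} and Lemma~\ref{atoroidaltoruslink}.

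I expect the boundary-parallel case to be the main obstacle: the work lies in tracking which reglued solid torus lands inside $V$ and with what winding number, and in checking that each resulting linking-number equality conflicts with the parity and non-divisibility constraints on $r$ and $r_i$. The reduction to a single boundary torus and the compressible case are routine once Lemma~\ref{annulitwocomponent2} and the two atoroidality results are available.
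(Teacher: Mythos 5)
Your skeleton (contradiction, reduction to a single boundary torus via Lemma~\ref{annulitwocomponent2}, and reliance on the atoroidality results Lemma~\ref{atoroidaltoruslink} and Corollary~\ref{atoroidaltoruslink2}) matches the paper, but your central move --- capping $A$ off with a boundary annulus to a closed torus $T_A=A\cup B_1$ and Dehn filling a component \emph{before} applying atoroidality --- creates a genuine gap in the compressible branch. Once $N(J)$ (resp.\ $N(L_j)$) is reglued, a compressing disc for $T_A$ in $S^3-N(T(p,2))$ (resp.\ in $M_i$) is free to pass through the reglued solid torus, so it is not a disc in $M(p,2,r)$ and yields no contradiction with the essentiality of $A$. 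Your dichotomy fails concretely in the meridional case: if $\partial A$ consists of meridians of $\partial N(J)$, then $T_A$ \emph{always} compresses in $S^3-N(T(p,2))$ via a meridian disc of the reglued $N(J)$ whose boundary is the core of $B_1$; that boundary is essential on $\partial N(J)$, so neither of your alternatives (``a compression of $A$'' or ``$\partial A$ bounds a disc on $\partial N(J)$'') occurs, and no contradiction follows. This case is not vacuous: the paper must exclude meridional boundary slopes by a separate argument (such an annulus would make $T(p,2)$, $M_1$, or $M_2$ reducible, which is ruled out by linking numbers), and your framework cannot detect it. The same defect infects your boundary-parallel branch: parallelism of $T_A$ in the filled manifold takes place through a region that may contain the reglued solid torus, so it carries no direct information about $A$ in $M(p,2,r)$ without exactly the winding-number analysis you defer.

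The paper avoids this trap by never capping $A$ off. It keeps $A$ properly embedded and views it inside $S^3-N(C)$, where $C$ is the unknotted component containing $\partial A$ (i.e.\ it fills the \emph{other two} components), and uses the fact that a solid torus contains no essential annuli; thus $A$ itself must be compressible, boundary compressible, or boundary parallel there. In the boundary-parallel case $A$ and an annulus $B\subset\partial N(C)$ bound a solid torus $V$, and the proof explicitly tracks which filled components lie in $V$ and with what winding numbers, eliminating every configuration using Lemma~\ref{LinkingNumber}, Lemma~\ref{atoroidaltoruslink}, and Corollary~\ref{atoroidaltoruslink2}; in the compressible case the compressing data can be rearranged into discs genuinely contained in $M(p,2,r)$. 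Note also that your plan to ``run the solid-torus bookkeeping from Proposition~\ref{noessentialannuli}'' cannot work as stated: that argument leans on the hyperbolicity of $M_i$ (Lee's theorem), which is exactly what fails when $q=2$, and the replacement bookkeeping --- which is the bulk of the paper's proof --- is left undone in your sketch.
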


\begin{proof}Assume for contradiction that $M(p, 2, r)$ has an essential annulus $A$. By Lemma~\ref{annulitwocomponent2}, the boundaries of $A$ are in just one boundary component $C$ of $\partial M(p, 2, r)$. We also refer to $C$ as the corresponding link component. Since each boundary component of $M(p, 2, r)$ is unknotted and the unknot torus has no essential annuli \cite[ page 15]{hatcher2007notes}, $A$ is compressible, boundary compressible, or boundary parallel in the exterior of $C$ in $S^3$. Denote by $M_C$ the exterior of $C$ in $S^3$.

Consider first that $A$ is boundary parallel in $M_C$. Then, $A$ is boundary parallel to an annulus $B$ in $C$ so that $A\cup B$ bounds a solid torus $V$ in $M_C$. As $A$ is essential in $M(p, 2, r)$, one component of $\partial M(p, 2, r)$ must be contained in $V$.

Assume first that only one component $K$ of $\partial M(p, 2, r)$ is contained in $V$. Then, $K$ must have wrapping number  greater than zero in $V$ otherwise $M(p, 2, r)$ would be reducible, which is not possible by Lemma~\ref{irreducible}. 
Hence, $V$ is trivial as the unknot doesn't have any essential tori \cite[page 15]{hatcher2007notes}. Furthermore, $B$ can't be a meridional annulus of $C$ otherwise $T(p, 2)$, $M_1$, or $M_2$ would be reducible, but it is not possible by a linking number argument.

Suppose first that $C = L_i$ for $i = 1$ or $2$. 
Denote by $\omega$ the winding number of $K$ in $V$. 
If $K = J$, then the linking number $r_j$ of $J$ and $L_j$ would be either $\omega\frac{p}{2}$ if the core of $B$ has the form $T(\pm 1, a)$ or $a \omega\frac{p}{2}$ if the core of $B$ has the form $T(a, \pm 1)$ because $L_i$ and $L_j$ have linking number equal to $p/2$. However, it contradicts the hypotheses.
Now suppose that $K = L_j$. Then, the linking number  $r_j$ of $L_j$ and $J$ would be either $r_i\omega$ if the core of $B$ has the form $T(\pm 1, a)$ or $a r_i\omega$ if the core of $B$ has the form $T(a, \pm 1)$, which is a contradiction since $r_j$ is equal to $r_i+1$ or $r_i-1$ by Lemma~\ref{LinkingNumber}.

Consider now that $C = J$ and $K = L_i$. Consider first that $\partial V$ is boundary parallel to $\partial N(L_i)$. Then,
the wrapping number of $L_i$ in $V$ is one. If the core of $B$ has the form $T(\pm 1, a)$ in $\partial N(J)$, then $J$ and $L_i$ are isotopic in $M(p, 2, r)$, which implies that they have the same linking number with $L_j$, a contradiction since $r_j\neq \frac{p}{2}$. Assume now that the core of $B$ has the form $T(a, \pm 1)$. So, $\vert a\vert >1$ as $B$ can't be a meridional annulus. We have that $r_i = 1$ as $L_i$ only wraps one time along the meridian of  $\partial N(J)$. So, $r_j = 2$. Thus, if we push $L_i$ to the inside of $N(J)$, $\partial N(J)$ becomes an essential torus for $T(p, 2)$, but $T(p, 2)$ doesn't have an essential tori by Lemma~\ref{atoroidaltoruslink}.
Suppose now that $\partial V$ is not boundary parallel to $\partial N(L_i)$. If $r_j > 1$, then $\partial V$ is essential in $T(p, 2)$, which is impossible by Lemma~\ref{atoroidaltoruslink}. Assume $r_j = 1$. So, $r_i = 2$.
If the core of $B$ is the torus knot $T(\pm 1, a)$, then the winding number $\omega$ of $L_i$ in $V$ is equal to $\frac{p}{2}$ as the linking number between $L_i$ and $L_j$ is equal to $\frac{p}{2}$. Then, $r_i = 2 =  a\omega$, which is impossible since $a$ is an integer and $p>4$. 
For the last case of this case, consider that the core of $B$ has the form of the trivial torus knot $T(a, \pm 1)$ with $\vert a\vert >1$. Then, $\partial V$ is essential in $T(p, 2)$, which contradicts Lemma~\ref{atoroidaltoruslink}.

Suppose now that the other two components of $\partial M(p, 2, r)$ are contained in $V$. They must have wrapping numbers greater than zero in $V$ otherwise $M_1$, $M_2$, or $T(p, 2)$ would be reducible, but it is not possible by a linking number argument. So, $V$ is unknotted. For the same reason, $B$ can't be a longitudinal annulus of $C$.

If the core of $B$ has the form of the trivial torus knot $T(a, \pm 1)$, then the disk with boundary the longitude of $C$ is a boundary compression disk for $A$,  which is not possible since $A$ is essential in $M(p, 2, r)$.
Thus, the core of $B$ must have the form of the trivial torus knot $T(\pm 1, a)$ with $\vert a \vert>1$.
If $\partial V$ is boundary parallel to these components inside $V$ individually, then these components are isotopic in $M(p, 2, r)$. Hence, they have the same linking number with $C$, a contradiction.
Thus, $\partial V$ is not boundary parallel to one of these components. This implies that $\partial V$ is an essential torus in one of $M_1, M_2$, or $S^3-N(T(p, 2))$, depending on where these components belong to. But, this is a contradiction since none of these manifolds have an essential torus by Corollary~\ref{atoroidaltoruslink2} and Lemma~\ref{atoroidaltoruslink}.

Consider then that $A$ is compressible in $M_C$. Then, there is a compression disk $D$ for $A$ in $M_C$. The surgery of $A$ along $D$ yields two disks, $D_1$ and $D_2$, such that $\partial A = \partial D_1 \cup \partial D_2$.
If one $\partial D_i$ bounds a disk on $\partial N(C)$, then by considering a disk with boundary in $A$ close to $\partial D_i$, we can see that $A$ is also compressible in $M(p, 2, r)$, which is not possible. Now suppose that none $\partial D_i$ bounds a disk on $\partial N(C)$. Then, both $\partial D_1$ and $\partial D_2$ are either isotopic to the longitude or meridian of $\partial N(C)$ implying that $A$ is also boundary parallel in $M_C$. However, we have a contradiction with the first case.

Therefore, because $M_C$ is irreducible and $A$ is incompressible and not boundary parallel in $M_C$, the annulus $A$ is essential in $M_C$, which is a contradiction.
\end{proof}

\bibliographystyle{amsplain}

\bibliography{H-T-T-L}

\end{document}